\SetMathAlphabet{\mathcal}{normal}{OMS}{cmsy}{m}{n}
\titleformat{\section}[hang]{
	\usefont{T1}{qhv}{b}{n}\selectfont} 
{} 
{0em}
{\hspace{-0.4pt}\Large \thesection\hspace{0.6em}}
\renewcommand{\refname}{References}
\renewcommand{\subsectionmark}[1]
{}
\newcommand{\helv}{
	\fontfamily{phv}\fontseries{b}\fontsize{9}{11}\selectfont}
\edef\restoreparindent{\parindent=\the\parindent\relax}
\definecolor{linkcolour}{rgb}{0,0.2,0.6}
\DeclareDocumentCommand{\newfaktor}{s m O{0.5} m O{-0.5}}{
	\setbox0=\hbox{\ensuremath{#2}}
	\setbox1=\hbox{\ensuremath{\diagup}}
	\setbox2=\hbox{\ensuremath{#4}}
	\raisebox{#3\ht1}{\usebox0}
	\mkern-5mu\ifthenelse{\equal{#1}{\BooleanTrue}}
	{\diagup}
	{\rotatebox{-44}{\rule[#5\ht2]{0.4pt}{-#5\ht2+#3\ht0+\ht0}}}
	\mkern-4mu
	\raisebox{#5\ht2}{\usebox2}
}
\xapptocmd{\@sect}{\csname #1mark\endcsname{#7}}{}{}
\newtheoremstyle{mystyle}
{.2\baselineskip}
{.2\baselineskip}
{\itshape}
{}
{\fontsize{10.5pt}{0pt}\scshape }
{}
{.5em }
{}
\theoremstyle{mystyle}
\newtheorem{lemma}{Lemma}[section]
\newtheorem{theorem}[lemma]{Theorem}
\newtheorem{proposition}[lemma]{Proposition}
\newtheorem{prop}[lemma]{Proposition}
\newtheoremstyle{roman}
{.4\baselineskip}
{.2\baselineskip}
{\normalfont \setlength\parskip{.2\baselineskip}}
{}
{\scshape}
{}
{.5em }
{}
\theoremstyle{roman}
\newtheorem{notat}[lemma]{Notation}
\newtheorem{remark}[lemma]{Remark}
\newtheorem*{thm*}{Theorem}
\renewenvironment{proof}[1][\proofname]{\par
	\pushQED{\qed}
	\normalfont \topsep0\p@\@plus0\p@\relax
	\trivlist
	\item\relax
	{\itshape
		#1\@addpunct{.}}\hspace\labelsep\ignorespaces
}{
	\popQED\endtrivlist\@endpefalse
}
\newcommand{\Rep}{\operatorname{Rep}}
\newcounter{saveenumerate}
\newcommand{\enumeratext}[1]{
	\setcounter{saveenumerate}{\value{enum\romannumeral\the\@enumdepth}}
\end{enumerate}
#1
\begin{enumerate}[i)]
	\setcounter{enum\romannumeral\the\@enumdepth}{\value{saveenumerate}}
}
\newcommand{\m}{\mathfrak{m}}
\newcommand{\GSp}{\operatorname{GSp}}
\newcommand{\MM}{\mathcal{M}}
\renewcommand{\M}{\operatorname{M}}
\renewcommand{\opp}{\mathrm{op}}
\renewcommand{\H}{\mathcal{H}}
\newcommand{\V}{\mathcal{V}}
\newcommand{\Zar}{\textrm{Zar}}
\newcommand{\calX}{\mathcal{ X}}
\newcommand{\calA}{\mathcal{ A}}
\newcommand{\calY}{\mathcal{Y}}
\newcommand{\calH}{\mathcal H}
\newcommand{\calM}{\mathcal{M}}
\newcommand{\Aff}{\operatorname{Aff}}
\newcommand{\Jac}{\operatorname{Jac}}
\renewcommand{\M}{\mathcal{M}}
\newcommand{\X}{\mathcal{X}}
\newcommand{\XX}{\mathcal{X}}
\renewcommand{\Aut}{\operatorname{Aut}}
\newcommand{\crys}{\mathrm{crys}}
\newcommand{\LGr}{\operatorname{LGr}}
\newcommand{\YY}{\mathcal{Y}}
\newcommand{\univ}{\mathrm{univ}}
\renewcommand{\cris}{\crys}
\title{Bounds on the number of rational points of curves in families}
\author{Pedro Lemos and Alex Torzewski}
\begin{document}
\maketitle
\begin{abstract}
In this note, we give an alternative proof of uniform boundedness of the number of integral points of smooth projective curves over a fixed number field with good reduction outside of a fixed set of primes. We use that due to Bertin--Romagny, the Kodaira--Parshin families constructed by Lawrence--Venkatesh can themselves be assembled into a family. We then repeat Lawrence--Venkatesh's study of the $p$-adic period map, together with the comparison of nearby fibres. 
\end{abstract}
\section{Introduction}
Suppose we are given a smooth variety $\mathcal{X}/\OO_{K,S}$ for some number field $K$ and finite set of primes $S$. In \cite{LV}, Lawrence--Venkatesh suggest the following method for studying its integral points. Suppose that $\calX$ is additionally equipped with a smooth projective family $\calY\overset{\pi}{\to} \calX$. To points of $\calX$ we may associate the Galois representation given by the étale cohomology of the fibre $\calY_x$. This can be done globally and locally:
\[ \begin{tikzcd}
\calX({\OO}_{K,S})\ar{d}[swap]{x \mapsto H^1_\et(\calY_{x,\bar K},\q_p)} \ar[hookrightarrow]{r} & \calX(\OO_{K_v})\ar{d}{x\mapsto H^1_\et(\calY_{x,\overline{K_v}},\q_p)}\\
\Rep_{\q_p}(G_K)/\sim  \ar{r}[swap]{\textrm{restr.}}& \Rep_{\q_p}(G_{K_v})/\sim
\end{tikzcd}  \]
Here $G_F$ denotes the absolute Galois groups of a field $F$ and $\Rep_{\q_p}(G_{F})/\sim $ denotes isomorphism classes of $\q_p$-valued $G_F$-representations. The Faltings--Serre method shows that the image of the left hand map is finite if the $H^1_\et(\calY_{x,\bar K},\q_p)$ are known to be semisimple \cite[V.2]{FaltingsWustholz}. This semisimplicity is known to hold in the case when the fibres are abelian varieties \cite[Satz 3]{FaltingsSemisimplicity}, or disjoint unions thereof, and is expected in general. For $\mathcal{X}$ a smooth projective curve of genus $g\ge 2$, Lawrence--Venkatesh \cite{LV} constructed a ``Kodaira--Parshin'' family, whose fibres are disjoint unions of abelian varieties and for which they could show that for suitable choices of $p$ and $v\mid p$, the right hand map is finite-to-one. As a result, they were able to recover the Mordell Conjecture on the finiteness of $\calX(\OO_{K,S})$.

Lawrence--Venkatesh study $\calX(\OO_{K_v})\to \Rep_{\q_p}(G_{K_v})/\sim$ via $p$-adic period mappings. For $p$ coprime to $S$ and $v\mid p$ unramified, the Galois representations $H^1_\et(\calY_{x,\overline{K_v}},\q_p)$ are crystalline and their associated filtered $F$-isocrystal is given by $(H^1_\cris(\calY_{\overline x}/K_v),\phi, F^\bullet)$, where $\phi$ is the action of absolute Frobenius and $F^\bullet$ is the Hodge filtration on de Rham cohomology. Within a residue disk $\Omega$, the pair $(H^1_\cris(\calY_{\overline x}/K_v),\phi) $ is constant, so the variation of the filtered $F$-isocrystal can be recorded simply by the period map $\Phi \colon \Omega \to \calH$ measuring the variation of its Hodge filtration within some suitable period domain. The choices of Hodge filtration on $H^1_\cris(\calY_{\overline x}/K_v)$ which define isomorphic filtered $F$-isocrystals are contained in a closed algebraic subvariety $Z\subseteq \calH$. For the Kodaira--Parshin family, Lawrence--Venkatesh showed that the image of the period map is never contained within $Z$ and consequently that $\Phi^{-1}(Z)$ is a finite set. 

In this note, we show that existence of a ``relative Kodaira--Parshin family'' $\calY \to \calX\to \calM$ whose fibre over $u \in \calM$ recovers the Kodaira--Parshin family allows one to deduce uniform bounds on the size of $\Phi^{-1}(Z)\cap \calX_u(\OO_{K_v})$, i.e.\ the fibres of the map $\calX_u(\OO_{K_v})\to \Rep_{\q_p}(G_{K_v})/\sim$. Since such a family is known to exist by work of Bertin--Romagny \cite{BertinRomagny}, we are able to deduce:
\begin{theorem}\label{t:main}
	Let $F$ be a number field and $S$ be a finite set of primes. Let $g\geq 2$ be an integer. There exists a constant $N= N(F,S,g)$ such that
	\[ \#\mathcal{C}(F)\leq N\] for all smooth projective curves $\mathcal{C}/\OO_{F,S}$ of genus $g$ with good reduction outside of $S$.
\end{theorem}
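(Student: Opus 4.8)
The plan is to follow the Lawrence–Venkatesh architecture faithfully, but carry every construction out in families over the base $\calM$ parametrising curves of genus $g$ (with appropriate level structure over $\OO_{F,S}$), using the relative Kodaira–Parshin family $\calY\to\calX\to\calM$ supplied by Bertin–Romagny. First I would reduce to bounding $\#\calX_u(\OO_{F,S})$ uniformly in $u\in\calM(\OO_{F,S})$: since every smooth projective curve $\calC/\OO_{F,S}$ of genus $g$ with good reduction outside $S$ appears (after possibly enlarging $S$ and $F$ by a bounded amount to add level structure, which only changes the constant) as a fibre $\calX_u$, a uniform bound on fibres gives Theorem~\ref{t:main}. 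Next, I would split $\calX_u(\OO_{F,S})\to\Rep_{\q_p}(G_F)/\!\sim$ through the local map at a well-chosen place $v\mid p$, exactly as in the diagram in the introduction; the global-to-local map has finite image by Faltings–Serre (the fibres of Kodaira–Parshin are disjoint unions of abelian varieties, so $H^1_\et$ is semisimple), and the number of such global Galois representations is itself bounded in terms of $F$, $S$, $p$ by a counting argument on conductors and Hodge–Tate weights. So it remains to bound the fibres of the \emph{local} map $\calX_u(\OO_{F_v})\to\Rep_{\q_p}(G_{F_v})/\!\sim$, uniformly in $u$.

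**The $p$-adic period map in families.** For this local step I would assemble the $p$-adic period maps of all the fibres into a single period map over the base. Concretely: over the residue disk $\Omega\subseteq\calX_u(\OO_{F_v})$ the crystalline Frobenius on $H^1_\cris(\calY_{\bar x}/F_v)$ is locally constant, so the variation of the filtered isocrystal is recorded by a rigid-analytic period map $\Phi_u\colon\Omega\to\calH$ to a period domain (a flag variety), and the relation ``defines an isomorphic filtered $\phi$-module'' cuts out a closed algebraic $Z\subseteq\calH$; the local fibre sizes are controlled by $\#\Phi_u^{-1}(Z)$. The key point is that, because $\calX\to\calM$ is itself a smooth family and the relative de Rham cohomology $R^1\pi_*\Omega^\bullet$ with its Gauss–Manin connection and Hodge filtration varies algebraically over the total space, all of these period maps fit into a period map $\Phi\colon\calX^{\mathrm{an}}_{/F_v}\supseteq(\text{union of residue disks})\to\calH$ depending algebraically on $u$, and the ``Zariski-density of the image'' statement that Lawrence–Venkatesh prove for a single Kodaira–Parshin family — that $\Phi_u(\Omega)\not\subseteq Z$, in fact that the image is not contained in any proper algebraic subvariety of a certain dimension — is an \emph{algebraic} condition on $u$. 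Since it holds for every $u$ (that is their theorem, applied fibrewise), one gets a uniform lower bound on the dimension of the Zariski closure of $\Phi_u(\Omega)$, and then the standard intersection-theoretic/Bézout-type estimate bounds $\#\Phi_u^{-1}(Z)$ inside each residue disk by a constant depending only on the degrees of $\calH$, $Z$, and $\Phi$ — all of which are uniform over the (quasi-compact) base. Summing over the boundedly-many residue disks covering $\calX_u(\OO_{F_v})$ gives the uniform local bound.

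**The main obstacle.** I expect the genuinely delicate point to be the \emph{uniformity} of the period-map estimate over the base $\calM$: one must check that the comparison isomorphisms, the Frobenius structure, and the period domain $\calH$ (together with the subvariety $Z$ and the relevant monodromy/Mumford–Tate–theoretic input used by Lawrence–Venkatesh to show $\Phi_u(\Omega)\not\subseteq Z$) all propagate across the family with degrees bounded independently of $u$, and in particular that finitely many ``bad'' fibres do not need to be excluded in a way that ruins uniformity. This is precisely where one needs to ``repeat Lawrence–Venkatesh's study of the $p$-adic period map, together with the comparison of nearby fibres'': the comparison of nearby fibres lets one transfer the Zariski-density statement from any one fibre to a Zariski-open neighbourhood, and since $\calM$ (suitably compactified or stratified) is Noetherian one covers it by finitely many such neighbourhoods, obtaining a single constant $N(F,S,g)$. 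The remaining ingredients — the Faltings–Serre finiteness, the conductor bound on the global Galois side, the Bézout estimate in a $p$-adic residue disk — are by now standard and contribute only bounded quantities.
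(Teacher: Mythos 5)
Your overall architecture — reduce to a uniform bound on fibres of the local map $\mathcal{X}_u(\mathcal{O}_{F_v})\to\Rep_{\q_p}(G_{F_v})/\!\sim$, compare period maps of nearby fibres, and assemble via a covering argument over the base — matches the paper. However, the central quantitative step does not work as you state it. You claim to bound $\#\Phi_u^{-1}(Z)$ "by a constant depending only on the degrees of $\mathcal{H}$, $Z$, and $\Phi$" via a "Bézout-type estimate". But the period map $\Phi_u$ is not an algebraic morphism: it is a transcendental $p$-adic analytic map defined locally by divided-power power series (flat sections of the Gauss--Manin connection). It has no degree, and no Bézout bound is available. The paper's actual mechanism is a Newton-polygon argument (Lemma~\ref{l:newtonpoly}): a non-zero element $F\in\mathcal{O}_L\langle\langle t\rangle\rangle$ of the divided power algebra has finitely many zeroes in $\mathfrak{m}_L$, and the count is bounded purely in terms of its reduction $\overline{F}$ modulo $\mathfrak{m}_L^k$. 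The whole point of "comparing nearby fibres" is to arrange that, for $s'$ $p$-adically close to $s$, the composites $f\circ\Phi|_{\Omega_{s'}}$ and $f\circ\Phi|_{\Omega_s}$ have \emph{identical} reduction mod $\mathfrak{m}_L^k$ (Lemma~\ref{l:abstract}), so the Newton-polygon bound is locally constant in $s$. A degree count cannot reproduce this.

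Relatedly, the condition $\Phi_u(\Omega)\not\subseteq Z$ is \emph{not} an algebraic condition on $u$ (it concerns the image of a transcendental map), and it propagates only to $p$-adic open, not Zariski-open, neighbourhoods of $u$. The covering argument therefore rests on $p$-adic compactness of $u(\Omega)\subseteq\mathcal{M}(\mathcal{O}_L)$, not on Noetherianity of $\mathcal{M}$. There is a Zariski-flavoured variant sketched in Remark~\ref{r:CHM}, but it requires the considerably stronger input of $p$-adic Bakker--Tsimerman to upgrade the period-map estimate to genuine non-Zariski-density of $\Phi^{-1}(Z)$, together with a descending stratification of the base; you have not supplied those ingredients. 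As written, the Bézout/Noetherian version is a gap rather than a shortcut; replacing it with the divided-power Newton-polygon comparison and $p$-adic compactness recovers the paper's proof.
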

This is traditionally seen by combining the Shafarevich Conjecture, i.e.\ finiteness of the number of isomorphism classes of smooth projective curves $\cal{C}/\OO_{K,S}$, and the Mordell Conjecture, both of which are known due to Faltings \cite[Kor.\ 1, Satz 3]{FaltingsSemisimplicity}.

The idea of the proof is that for $u,u' \in \mathcal{M}(\OO_{K_v})$ sufficiently close, in a way that can be made precise, the period maps are equal modulo $\mathfrak{m}_{K_v}^k$. For $k$ sufficiently large, a simple Newton polygon argument shows then shows that the number of zeroes of the divided powers power series vanishing on $\Phi^{-1}(Z)$ has a common bound. This method does not require any global properties of the family with fibres recovering the Kodaira--Parshin families beyond its existence and smoothness.

In \cite{LV}, they able to obtain finiteness without the assumption of semisimplicity of the $H^1_\et(\calY_{x,\bar{K}},\q_p)$, i.e. avoiding \cite[Satz 3]{FaltingsSemisimplicity}. This can be maintained in the proof of Theorem \ref{t:main}.

In Remark \ref{r:CHM}, we sketch an alternative proof of Theorem \ref{t:main} in the style of Caporaso--Harris--Mazur \cite{CHM}, using the stronger input of non-Zariski density within $\calX \ti_{\calM} U$ for arbitrary subvarieties $U\subseteq \M$. Both methods rely on the existence of a relative Kodaira--Parshin family and Lawrence--Venkatesh's study of its fibres.

\subsubsection*{Acknowledgements}
We are grateful to Alex Betts and Netan Dogra for helpful discussions and especially to the latter for drawing our attention to the work of Caporaso--Harris--Mazur. We also wish to particularly thank Marco Maculan for explaining how a relative Kodaira--Parshin family is a consequence of work of Bertin--Romagny and Matthieu Romagny for being available to explain the subtleties of their construction. Additionally, we are very grateful for the comments of an anonymous referee, which improved both its mathematical content and its readability.
	\section{$p$-adic period mappings}\label{s:background}
	We use this section to fix notation and recall some basic properties of period mappings.
	
	\begin{notat}
		\begin{itemize}
			\item Let $L/\q_p$ be a finite unramified field extension with ring of integers $\OO_L$.
			\item Write $\mathfrak{m}_L$ for the maximal ideal of $\OO_L$ and $k_L$ for the residue field.
			\item Fix an algebraic closure $\bar L/L$ and let $G_L$ denote the absolute Galois group of $L$.
			\item Fix a choice of smooth projective family $\pi \colon \YY\to \XX$ over $\OO_L$.
			\item For $x \in \XX(\OO_L)$, write $\YY_x$ for the fibre above $x$. Fix $i\ge 0$ and let $\rho_x \colon G_L \to \GL(H^i_\et(\YY_{x, \bar L},\q_p))$ be the corresponding $p$-adic Galois representation corresponding to $x$ (we shall take $i=1$ in subsequent sections). Our assumptions ensure that $\rho_x$ is crystalline for all $x$.
		\end{itemize}
	\end{notat}
	The variation of $\rho_x$ can be studied by means of period mappings. More specifically, one can study the variation of the filtered $F$-isocrystal $(H^i_\crys(\mathcal{Y}_{\overline{x}}/L), \phi, F^\bullet)$ associated to $\rho_x$ via $p$-adic Hodge theory. Here $\phi$ denotes the action of absolute Frobenius and $F^\bullet$ denotes the Hodge filtration on de Rham cohomology on $ H^i_\dR(\mathcal{Y}_x/L)\simeq H^i_\crys(\mathcal{Y}_{\bar x}/L)$.
	
	Fix a choice of residue disk $\Omega \subseteq \XX(\OO_L)$. Since for all $x\in \Omega$ the fibres $\mathcal{Y}_x$ have the common special fibre, the $F$-isocrystal underlying $(H^i_\crys(\mathcal{Y}_{\overline{x}}/L), \phi, F^\bullet)$ is constant within $\Omega$. As such, to record the variation of the filtered $F$-isocrystal, we need only record the data of the Hodge filtration within $H^i_\crys(\mathcal{Y}_{\bar x}/L)$. This yields a period map
	\begin{equation*} \Phi\colon \Omega \to \H(L), \end{equation*}
	where $\H$ is some suitable (completed) period domain. For the purposes of this overview, $\H$ can be taken to be the Grassmannian parametrising flags of the same type as the Hodge filtration on the fibres. Since $\YY_{x}$ is defined over $\OO_L$, the Hodge filtration is in fact defined over $H^i_\crys(\mathcal{Y}_{\bar x}/\OO_L)$ and since de Rham cohomology together with its Hodge filtration commutes with reduction modulo $\m_L^k$, the filtration is given by the (compatible) sequence of Hodge filtrations on mod $\m_L^k$ crystalline cohomology coming from the mod $\m_L^k$ crystalline--de Rham comparisons. As a result, we can instead consider $\Phi$ to take values in the smooth $\OO_L$-scheme $\H$ whose $\OO_L$-points parametrise $\OO_L$-flags within $H^i_\crys(\mathcal{Y}_{\bar x}/\OO_L)$ and its reduction modulo $\m_L^k$ coincides with the mod $\m_L^k$ period maps
	\begin{equation*} \overline{\Phi}  \colon \overline{\Omega} \longrightarrow \H (\OO_L/\m_L^k). \label{eq:redperiodmap}  \end{equation*}
	
	Alternatively, one can fix a base point $x_0\in \Omega$ and consider the variation of the Hodge filtration of the fibres when parallel transported back to $H^1_\dR(\mathcal{Y}_{x_0}/L)\simeq H^1_\crys(\calY_{\overline{x_0}}/L)$ using the Gauss--Manin connection and a local trivialisation of $R^if_* \Omega^\bullet_{\YY/\XX}$ respecting the Hodge filtration. These constructions agree as parallel transport via Gauss--Manin $H^1_\dR(\YY_{x}/L)\overset{\sim}{\to } H^1_\dR(\YY_{x_0}/L)$ precisely coincides with composing the crystalline--de Rham comparison isomorphisms for $x$ and $x_0$.

	Multiple choices of filtration on $(H^1_\crys(\YY_{\bar x}/L),\phi)$ will define isomorphic Galois representations. Explicitly, if $\alpha$ is any $L$-linear automorphism centralising $\phi$, then $F^\bullet$ and $\alpha(F^\bullet)$ will be isomorphic via $\alpha$. Alternatively, two points $x,x' \in \Omega$ for which $\rho_x$ and $\rho_{x'}$ are isomorphic have image under $\Phi$ lying within the same orbit under the centraliser $Z(\phi)$. As such, they also satisfy the weaker property of lying in the same orbit of $Z(\phi^{[L:\q_p]})$. These $Z(\phi^{[L:\q_p]})$-orbits have the advantage that they are closed algebraic subspaces of $\mathcal{H}(\OO_L)$ since $\phi^{[L: \q_p]}$ is $L$-linear.
	
	To obtain uniform bounds on the number of points within a fixed isomorphism class of Galois representations, we consider the preimage under $\Phi$ of closed algebraic subspaces. To do so requires further understanding of the structure of period maps. Via the description above, power series which locally define period maps satisfy the same properties as those defining flat sections of connections, and these are much more rigidly constrained than arbitrary locally convergent power series.
	
	Let $(\mathcal{V},\nabla)$ be any vector bundle with flat connection on $\XX$. Fix a base point $x_0 \in \XX(\OO_L)$ and a basis $\{\overline{v}_1,...,\overline{v}_n\}$ of $\mathcal{V}_{\overline{ x_0}}$. If $U\subseteq \XX$ is a sufficiently small affine open, we may lift the $\overline{v}_i$ to a basis $\{ v_1,...,v_n\}$ trivialising $\mathcal{V}$ over $U$. With respect to this choice of basis we may write
	\begin{equation*} \nabla(\underline{f}) = d\underline{f} + A\underline{f}   ,  \end{equation*}
		where $\underline{f}=(f_1,...,f_n)$ with $f_i\in \OO_U$ and $A$ is an $(n\ti n)$-matrix of sections of $\Omega^1_{\OO_U/\OO_L}$.
		
		Let $\Omega\subseteq \XX(\OO_L)$ be the residue disk containing $x_0$. Since $\mathcal{X}$ is smooth at the special fibre $\overline{x_0}$, its formal neighbourhood is isomorphic to $\Spf \OO_L[[t_1,...,t_r]]$, where $r=\dim \XX_L$ and, by definition, $\Omega$ consists of its the $\OO_L$-points with $x_0$ being the origin. Solutions to flat connections need not exist formally\footnote{That is, there need not be non-zero $\OO_L[[t_1,...,t_r]]$-valued elements of the kernel of $\nabla$.} in the integral case.
				
	\begin{notat}
		For any commutative ring $R$, let $R\la \la t_1,...,t_r\ra \ra$ denote the $n$-dimensional \emph{divided power algebra} over $R$. This is the free commutative $R$-algebra on symbols $t_i^{[s_i]}$, $i\in \{1,...,r\}$, $s_i\ge 0$, subject to the relations
		\[t_i^{[s_i]}\cdot t_i^{[s'_i]}=
		\left(\frac{(s_i+s'_i)!}{s_i! s'_i!}\right)t_i^{[s_i+s'_i]}\]
		(the fraction being considered as an integer). We consider $R\la \la t_1,...,t_r\ra \ra $ as a $R[[t_1,...,t_r]]$-algebra via $t_i^{s_i}\mapsto (s_i)!t_i^{[s_i]}$. When $R$ is an integral domain of characteristic zero, $R\la \la t_1,...,t_r\ra \ra$ can be considered as a subring of the ring of formal power series $\operatorname{Frac}(R)[[t_1,...,t_r]]$ via $t_i^{[s_i]}\mapsto \frac{t_i^{s_i}}{s_i!}$. 
	\end{notat}
Divided power algebras are minimal with respect to being able to run Picard--Lindel\"of's method of successive approximations to find solutions to flat connections:
	\begin{lemma} For any initial condition $\underline {f_{0}}\in \V_{0}$, there exists a unique $\underline{f}=(f_1,...,f_n)$ with $f_i \in \OO_L\la \la t_1,...,t_r\ra\ra$ for which $\nabla(\underline{f})=0$ and whose evaluation at $0$ is $\underline{f_{0}}$.
	\end{lemma}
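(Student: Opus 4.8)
The plan is to construct $\underline{f}$ one homogeneous piece at a time in the $t_i$-degree; this amounts to Picard--Lindel\"of's successive approximations, organised by total degree. With the coordinates $t_1,\dots,t_r$ and the connection matrix $A=\sum_{j=1}^{r}A_j\,dt_j$ as above (each $A_j$ a matrix over $\OO_L[[t_1,\dots,t_r]]$, integral because $A$ is), the equation $\nabla(\underline{f})=0$ is the system $\partial_{t_j}\underline{f}=-A_j\underline{f}$, $j=1,\dots,r$, and flatness of $\nabla$ is the integrability condition $[\partial_{t_j}+A_j,\,\partial_{t_k}+A_k]=0$ for all $j,k$. Write $\underline{f}=\sum_{d\ge 0}\underline{f}^{[d]}$ for the decomposition into homogeneous components of total degree $d$ in the $t_i$ (one finite sum per degree), and $\underline{g}_d:=\sum_{m=0}^{d}\underline{f}^{[m]}$ for the truncations. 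Then the degree-$d$ part of the system reads $\partial_{t_j}\underline{f}^{[d+1]}=-(A_j\underline{g}_d)^{[d]}$, whose right-hand side involves only $\underline{f}^{[0]},\dots,\underline{f}^{[d]}$ since the $A_j$ have no negative-degree terms. So I would set $\underline{f}^{[0]}:=\underline{f_0}$ and solve for $\underline{f}^{[d+1]}$ by induction on $d$.

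The inductive step rests on an integral Poincar\'e lemma: given homogeneous divided-power polynomials $\omega_1,\dots,\omega_r$ of degree $d$ with $\partial_{t_k}\omega_j=\partial_{t_j}\omega_k$ for all $j,k$, there is a unique homogeneous $\underline{h}$ of degree $d+1$ with $\partial_{t_j}\underline{h}=\omega_j$ for all $j$. The crucial point, and the reason divided powers are forced on us, is that the one-variable antiderivative $t^{[s]}\mapsto t^{[s+1]}$ is defined over $\OO_L$---no factorial enters the denominator---unlike ordinary integration; granting this, $\underline{h}$ is produced by the classical recursion (integrate in $t_1$; replace $\omega_2$ by $\omega_2$ minus the $t_2$-derivative of what one has, which by closedness involves no $t_1$, and integrate in $t_2$; and so on), using repeatedly that a homogeneous divided-power polynomial killed by $\partial_{t_j}$ has no $t_j$. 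To apply this at stage $d$ with $\omega_j:=-(A_j\underline{g}_d)^{[d]}$, I must check the $\omega_j$ are closed: setting $\underline{e}_j:=\partial_{t_j}\underline{g}_d+A_j\underline{g}_d$, the inductive hypothesis is precisely that $\underline{e}_j$ has no terms of degree $<d$, and substituting the integrability condition collapses $\partial_{t_k}\omega_j-\partial_{t_j}\omega_k$ into an expression of the shape $(A_k\underline{e}_j-A_j\underline{e}_k)^{[d-1]}$, which therefore vanishes. This produces $\underline{f}^{[d+1]}$; assembling the pieces gives $\underline{f}\in\OO_L\langle\langle t_1,\dots,t_r\rangle\rangle$ with $\nabla(\underline{f})=0$ and $\underline{f}(0)=\underline{f_0}$.

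Uniqueness I would deduce from leading terms. By linearity it suffices to show $\underline{u}=0$ is the only solution of $\nabla(\underline{u})=0$ with $\underline{u}(0)=0$. If $\underline{u}\ne 0$ were another, take $d\ge 1$ minimal with $\underline{u}^{[d]}\ne 0$; the degree-$(d-1)$ component of $\partial_{t_j}\underline{u}=-A_j\underline{u}$ reads $\partial_{t_j}\underline{u}^{[d]}=0$ for every $j$, since $A_j\underline{u}$ has no terms below degree $d$. But a homogeneous divided-power polynomial of positive degree all of whose partial derivatives vanish is $0$ (directly from the action of the $\partial_{t_j}$ on the basis monomials), which is a contradiction.

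The bulk of the work is bookkeeping: keeping track of which total degree each term lives in. The one step requiring genuine care, and the reason the divided-power algebra appears rather than $\OO_L[[t_1,\dots,t_r]]$, is to check that nothing ever divides---that $\partial_{t_j}$, the one-variable antiderivatives, and hence the Poincar\'e-lemma recursion built from them, all preserve $\OO_L\langle\langle t_1,\dots,t_r\rangle\rangle$; absorbing the relevant factorials is exactly the feature distinguishing divided powers from ordinary power series over the integers. The only genuinely non-formal input is the integrability of $\nabla$, used just once, to make the forms $\omega_j$ closed.
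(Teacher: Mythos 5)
Your argument is correct and is essentially the standard degree-by-degree Picard--Lindel\"of construction (existence via the integral Poincar\'e lemma for divided powers, with closedness supplied by the integrability of $\nabla$; uniqueness via the leading term). The paper does not spell this out but simply cites Katz \cite[Prop.\ 3.1.1]{KatzTdD}, whose proof proceeds along the same lines, so you have just written out what lies behind that reference.
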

\begin{proof}
	See, for example, \cite[Prop.\ 3.1.1.]{KatzTdD}\footnote{Katz's result is stated in the context of $F$-crystals, but the additional structure is not used for this result.}.
\end{proof}
In particular, period mappings are locally defined in terms of divided powers power series.

	Since 
	\begin{equation}v_p\left(\frac{1}{(p^r)!}\right)=- \left(\frac{p^r-1}{p-1} \right),\label{eq:valuationoffactorial}  \end{equation}
	the valuation of coefficients of $\sum_{\underline{s}\ge 0}\frac{a_{\underline{s}}}{|\underline{s}|!} t_1^{s_1}...t_r^{s_r} \in \OO_L\la \la t_1,...,t_r\ra \ra$ satisfies $v_p(a_{\underline{s}}/|\underline{s}|!) > -|\underline{s}|/(p-1)$ as soon as $|\underline{s}|>0$. As a result, elements of $\OO_L\la \la t_1,...,t_r\ra \ra$ converge on the open disk of radius $\left(\frac{1}{p}\right)^\frac{e(L/\q_p)}{p-1}$, where $e(L/\q_p)$ denotes the ramification degree of $L/\q_p$. Assuming that $L$ is unramified and $p\neq 2$, convergence holds on the entire residue disk $\mathfrak{m}_L\ti...\ti \mathfrak{m}_L$.
	
	
	We may define a reduction modulo $\m_L^k$ map $\OO_L\la\la t_1,...,t_r\ra \ra\to \OO_L/\m_L^k\la \la t_1,...,t_r\ra \ra$ by $t^{[s_i]} \mapsto t^{[s_i]}$, which we denote by $f\mapsto \overline{f}$. Examining the coefficients in the above calculation, we find that within $\mathfrak{m}_L\ti...\ti \m_L$, the value of ${f(x)}$ modulo $\mathfrak{m}_L^k$ can be recovered from $\overline{f}$ as $\overline{f}(\overline{x})$. In particular, if the mod $\m_L^k$ period map $\overline{\Phi}$ is non-constant, then its reduction in terms of divided powers power series is non-constant.
	
Suppose that $\im \Phi$ is not contained in some proper closed algebraic subspace $Z\subset \mathcal{H}$. Upon restriction to an open affine $U\subset \mathcal{H}$ containing\footnote{Note $\Phi(\Omega)$ is contained in a single residue disk by compatibility with the mod $p$ period map.} $\Phi(\Omega)$, there exists some regular algebraic function $f$ vanishing on $Z\cap U$ and non-vanishing on $\im \Phi$. After trivialising $\Omega$, the composite $F=f \circ \Phi$ is then a non-zero element of $\OO_L\la \la t \ra \ra$.
		\begin{lemma}\label{l:newtonpoly}
		Let $F \in \OO_{L}\la \la t \ra \ra$ be non-zero. Then $F$ has finitely many zeroes in the residue disk $\mathfrak{m}_L$. Moreover, if $\bar{F} \not \equiv 0 \in \OO_{L}/\mathfrak{m}_L^k \la \la t \ra \ra$, then the number of zeroes within $\mathfrak{m}_L$ is bounded in terms of only $\overline{F}$.
	\end{lemma}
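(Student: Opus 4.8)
The plan is to reduce Lemma~\ref{l:newtonpoly} to the classical theorem of Strassmann on zeros of $p$-adic power series, after rescaling the variable so that the residue disk $\mathfrak{m}_L$ becomes the closed unit disk $\OO_L$. Concretely, I would write $F=\sum_{s\ge0}a_st^{[s]}=\sum_{s\ge0}c_st^s$ with $a_s\in\OO_L$ and $c_s=a_s/s!\in L$. As already recorded after \eqref{eq:valuationoffactorial} --- using Legendre's formula $v_p(s!)=\frac{s-\sigma(s)}{p-1}\le\frac{s-1}{p-1}$ for $s\ge1$, where $\sigma(s)\ge1$ is the base-$p$ digit sum and where the identity is valid because $L/\q_p$ is unramified --- one has
\[ v_p(c_s)+s\ \ge\ s-\frac{s-1}{p-1}\ =\ \frac{(p-2)s+1}{p-1}\qquad(s\ge1), \]
and $v_p(c_0)\ge0$. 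Since $p\ne2$ (a standing assumption of this section, needed precisely so that $F$ converges on all of $\mathfrak{m}_L$), the right-hand side tends to $+\infty$, so $c_sp^s\to0$ in $L$. Hence, substituting $t=pu$, the series $F(pu)=\sum_s(c_sp^s)u^s$ is a nonzero element of the Tate algebra $L\langle u\rangle$; and because $L$ is unramified, $\mathfrak{m}_L=p\OO_L$, so $t\in\mathfrak{m}_L$ exactly when $u=t/p\in\OO_L$. Thus the zeros of $F$ in $\mathfrak{m}_L$ correspond bijectively to the zeros of $F(pu)$ in $\OO_L$, and by Strassmann's theorem their number is at most
\[ N:=\max\bigl\{\,s\ge0\ :\ v_p(c_s)+s=\min_{m}\bigl(v_p(c_m)+m\bigr)\,\bigr\}, \]
which is finite since $v_p(c_m)+m\to\infty$. (Equivalently, $N$ is the horizontal length of the part of the Newton polygon of $F$ of slope $\le-1$.) This gives the first assertion.

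For the quantitative statement, I would note first that the datum $\overline{F}\in\OO_L/\mathfrak{m}_L^k\la\la t\ra\ra$ records $k$ and the field $L$, hence $p$. Let $d$ be the least index with $\overline{a}_d\ne0$; then $a_d\notin\mathfrak{m}_L^k=p^k\OO_L$, so $v_p(a_d)\le k-1$, whence $\min_m(v_p(c_m)+m)\le v_p(c_d)+d\le(k-1)+d$. Combining this with the lower bound above, which gives $v_p(c_m)+m\ge\frac{p-2}{p-1}\,m$ for every $m\ge0$, any $N$ attaining the minimum satisfies $\frac{p-2}{p-1}N\le(k-1)+d$, i.e.
\[ N\ \le\ \frac{p-1}{p-2}\,(k-1+d). \]
This right-hand side depends only on $\overline{F}$, and the number of zeros of $F$ in $\mathfrak{m}_L$ is at most $N$, which is the claimed uniform bound.

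I do not expect a genuine obstacle here. The only point carrying any content is that a divided-powers power series over an \emph{unramified} $\OO_L$, once rescaled by the uniformiser $p$, actually lands in a Tate algebra --- this is exactly where the hypotheses that $L/\q_p$ is unramified and $p\ne2$ are used, and for $p=2$ the coefficients $c_sp^s$ need not tend to $0$ (for instance along $s=2^n$), so the statement itself can fail. Everything else --- the digit-sum estimate for $v_p(s!)$, the identification of $\mathfrak{m}_L$ with the closed unit disk via $t\mapsto t/p$, and Strassmann's theorem --- is standard, and the extraction of the explicit bound from $\overline{F}$ is the elementary computation above. (If one wanted $F$ in several variables $t_1,\dots,t_r$ the argument would need to be run along lines through the residue disk, but the lemma is stated, and used, in the one-variable case $r=1$.)
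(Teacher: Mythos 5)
Your proof is correct and is in substance the same as the paper's: rescaling $t=pu$ and invoking Strassmann is just a repackaging of the Newton polygon argument (your $N$ is exactly the length of the slope $\le-1$ part of the Newton polygon, as you note), and both proofs rest on the same coefficient estimate \eqref{eq:valuationoffactorial} together with $p\neq 2$ and $L$ unramified. The quantitative step is also identical to the paper's: a least index $d$ with $\overline{a}_d\neq 0$ forces $v_p(a_d)\le k-1$, which combined with the slope bound $-1/(p-1)$ yields a bound depending only on $\overline{F}$.
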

	\begin{proof}
		This is an easy Newton polygon argument. The number of roots of $F$ of valuation $e$ is equal to the length of the slope $-e$ segment of the Newton polygon of $F$. The possible valuation of the coefficients of elements of $\OO_L\la \la t \ra \ra$ is minimised at the $(p^r)$\textsuperscript{th}-terms. From \eqref{eq:valuationoffactorial}, we find that the Newton polygon of $F$ lies above the line through the origin of gradient $-1/(p-1)>-1$ except possibly at the origin (this uses that $p\neq 2$ and $L$ is unramified). In particular, the segments of slope $\le -1$ have finite length, i.e.\ there are only finitely many zeroes within $\mathfrak{m}_L$.
		
		Now assume that $F= \sum_{n\ge 0} a_n \frac{t^n}{n!}$ has $\overline{F}\not \equiv 0$. Either $v_p(a_0)=0$ or some $s>0$ has $v_p(a_s)\le k-1$. Combining either of these with the need to lie above the line of gradient $-1/(p-1)$ bounds the possible length of slopes $\le -1$ after the $s$\textsuperscript{th}-point in terms of only $\overline{F}$.
	\end{proof}
So we obtain finiteness of points with Galois representation lying in a fixed isomorphism class as soon as the corresponding $Z(\phi^{[L:\q_p]})$-orbit does not contain the image of $\Phi$. Exhibiting families with this property is the major part of \cite{LV}.

\section{Period mappings in families}
In view of Lemma \ref{l:newtonpoly}, we now work to make rigorous the following claim: If the families $\mathcal{Y}\to \mathcal{X}$ for varying choices of $\mathcal{X}$ can themselves be assembled into a smooth family over some base $\mathcal{M}$, then for sufficiently close fibres the period mapping can be identified modulo $p^k$ and the points of $\mathcal{X}_s$ for $s \in \mathcal{M}$ with a fixed Galois representation can be exhibited as lying within the zero set of functions $F_s$ with common reduction modulo $p^k$.

Now suppose we have a family $\mathcal{Y}\overset{\pi}{\to} \mathcal{X}\overset{u}{\to} \mathcal{M}$ of smooth $\OO_L$-varieties with $u$ is smooth and $\pi$ is smooth projective and with the fibres over $s \in \mathcal{M}(\OO_L)$ themselves being families in the sense of Section \ref{s:background}. In this situation, when given a residue disk $\Omega\subseteq \mathcal{X}(\OO_L)$, we denote by $\Omega_s$ the fibre over $s\in u(\Omega)$.

\begin{lemma}\label{l:abstract}
	Let $\mathcal{Y}\underset{\pi}{\overset{\mathrm{sm.\ proj.}}{\longrightarrow}} \mathcal{X} \underset{u}{\overset{\mathrm{sm.\ curve}}{\longrightarrow}} \mathcal{M}$ be a family. Fix a residue disk $\Omega\subseteq \mathcal{X}(\OO_L)$ and a base point $s\in u(\Omega)$. Fix a closed algebraic $\OO_L$-subspace $Z\subset \mathcal{H}$ and suppose that the period mapping $\Phi|_{\Omega_s}$ does not have image contained within $Z$. Then there is some neighbourhood $U$ of $s$ for which if $s'\in U$, then $\im \Phi|_{\Omega_{s'}}$ is not contained in $Z$ and $(\Phi|_{\Omega_{s'}})^{-1}(Z(\OO_L))$ is uniformly bounded independent of $s'$.
\end{lemma}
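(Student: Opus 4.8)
The plan is to reduce the statement to Lemma \ref{l:newtonpoly} by showing that, as $s'$ ranges over a sufficiently small neighbourhood $U$ of $s$, the divided-powers power series $F_{s'} = f \circ \Phi|_{\Omega_{s'}}$ (for a fixed regular function $f$ cutting out $Z$ near $\Phi(\Omega_s)$) all have a \emph{common} reduction modulo $\m_L^k$ for suitable fixed $k$. Concretely: first I would choose $f \in \OO_U$ on an affine open $U \subseteq \H$ containing $\Phi(\Omega_s)$ with $f$ vanishing on $Z \cap U$ and $\overline{F_s} = \overline{f \circ \Phi|_{\Omega_s}} \not\equiv 0$ in $\OO_L/\m_L^k \la \la t \ra \ra$ — such a $k$ exists precisely because $\im \Phi|_{\Omega_s} \not\subseteq Z$ means $F_s \neq 0$, and a nonzero element of $\OO_L \la \la t \ra \ra$ is nonzero modulo $\m_L^k$ for $k$ large enough. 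Then the crux is: for $s'$ close enough to $s$, $\overline{F_{s'}} = \overline{F_s}$ in $\OO_L/\m_L^k\la\la t\ra\ra$; once this is established, Lemma \ref{l:newtonpoly} immediately gives that each $F_{s'}$ is nonzero (hence $\im\Phi|_{\Omega_{s'}} \not\subseteq Z$) with a bound on its zeroes depending only on $\overline{F_{s'}} = \overline{F_s}$, hence uniform in $s'$.

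The main work — and the main obstacle — is proving that the mod $\m_L^k$ reduction of $F_{s'}$ is locally constant in $s'$. The key point is that everything in the construction of $\Phi|_{\Omega_{s'}}$ modulo $\m_L^k$ depends only on the mod $\m_L^k$ data of the family. By the discussion preceding Lemma \ref{l:newtonpoly}, $\overline{F_{s'}}$ is determined by the mod $\m_L^k$ period map $\overline{\Phi}|_{\overline{\Omega}_{s'}}$, which in turn is assembled from the mod $\m_L^k$ crystalline–de Rham comparisons for the fibres of $\YY \to \XX$ over $\Omega_{s'}$, together with the Gauss–Manin connection matrix $A$ (an $(n\times n)$-matrix of $1$-forms on $\XX$) governing the divided-powers solution via Picard–Lindelöf. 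I would argue that all of these — the special fibre of $\YY_x$ for $x \in \Omega_{s'}$, the crystalline cohomology with Frobenius, the Hodge filtration, and the connection matrix $A$ restricted to $\Omega_{s'}$ — depend on $s'$ only through its image $\overline{s'}$ in $\M(\OO_L/\m_L^k)$: indeed, crystalline/de Rham cohomology and its Hodge filtration commute with reduction mod $\m_L^k$, and the relevant residue disk $\Omega_{s'}$ has formal parameters that can be chosen compatibly across the family $u$ (using smoothness of $u$ to identify formal neighbourhoods). Hence the finitely many coefficients of $F_{s'}$ that survive mod $\m_L^k$ are themselves functions of $\overline{s'}$ alone, and taking $U$ to be the residue-$\m_L^k$ disk of $s$ inside $u(\Omega)$ forces $\overline{F_{s'}} = \overline{F_s}$.

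A few technical points I would need to handle carefully. First, I must ensure $\Phi(\Omega_{s'})$ lands in the same affine open $U$ where $f$ is defined; this follows since, for $s'$ in the mod $p$ residue disk of $s$, the special fibres agree, so by compatibility with the mod $p$ period map $\Phi(\Omega_{s'})$ and $\Phi(\Omega_s)$ lie in the same residue disk of $\H$, which may be taken inside $U$. Second, the trivialisation of $\Omega_{s'}$ used to write $F_{s'}$ as an element of $\OO_L\la\la t\ra\ra$ must be chosen to vary compatibly with the one on $\Omega_s$ — again this is arranged by fixing a formal coordinate system on a formal neighbourhood of the relevant section in $\XX$ and restricting to fibres. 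Third, I should check the hypotheses of Lemma \ref{l:newtonpoly} are in force: namely $p \neq 2$ and $L/\Q_p$ unramified, which are standing assumptions, so convergence on all of $\m_L$ and the Newton polygon bound hold uniformly. With these in place the argument closes: set $U$ equal to the $\m_L^k$-residue disk of $s$ in $u(\Omega)$, and the bound from Lemma \ref{l:newtonpoly} applied to the single series $\overline{F_s}$ works for every $s' \in U$.
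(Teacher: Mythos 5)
Your proposal is correct and takes essentially the same route as the paper's proof: choose $f$ and $k$ with $\overline{f\circ\Phi|_{\Omega_s}}\not\equiv 0$ modulo $\m_L^k$, use smoothness of $u$ to fix compatible formal coordinates identifying the fibres of $\Omega\to u(\Omega)$ so that $\overline{F_{s'}}=\overline{F_s}$ for $s'$ in the mod-$\m_L^k$ disk of $s$, and conclude via Lemma \ref{l:newtonpoly}. The paper simply makes the local-constancy step explicit by writing $\Phi|_\Omega$ as divided-power series $G_i\in\OO_L\la\la t_1,\dots,t_r,z\ra\ra$ and specialising $t_i^{[a_i]}\mapsto s_i'^{a_i}/a_i!$, which is exactly the computation your second paragraph gestures at.
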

\begin{proof}
	Since $Z$ is closed, there is some $k>0$ for which the image of $\overline{\Omega}_s$ under the mod $\m_L^k$ period map is not contained in the reduction of $Z$. As in the previous section, let $U\subset \H$ be some open affine containing $\im \Phi$ and fix some $f\in \OO_{U}(U)$ vanishing on $Z\cap U$ for which $\overline{f}$ is non-vanishing on the image of $\overline{\Phi |_{\Omega_s}}$.
	
	Since $u$ is smooth, the map of residue disks $\Omega\to u(\Omega)$ can be trivialised to identify fibres and under this $\overline{\Phi|_{\Omega_{s'}}}$ will be equal modulo $\mathfrak{m}_L^k$ for nearby $s'$. More specifically, if $\Omega$ lies above $w \in \mathcal{X}(k_L)$, then there exists a $\hat \OO_{\M,u(w)}$-algebra isomorphism $\hat \OO_{\mathcal{X},w}\cong \hat \OO_{\mathcal{M},u(w)}[[z]]$ for some coordinate $z$ \cite[(17.5.3)]{EGAIV4}. By the same argument, $\hat \OO_{\M, u(w)}$ is itself isomorphic to $\OO_L[[t_1,...,t_r]]$, so after choosing coordinates for $U\subset \H$, we can consider $\Phi|_{\Omega}$ as defined by a sequence of divided powers power series $(G_1,...,G_m)$ for $G_i \in \OO_L\la \la t_1,...,t_r,z\ra \ra$. In this concrete setting, the restriction $G_i|_{\Omega _{s'}}\in \OO_L\la\la z \ra \ra$ to fibre above $s'=(s'_1,...,s'_r)$ is given by $t_i^{[a_i]}\mapsto s_i^{a_i}/(a_i)!$. Here we have used that $L$ is unramified and $p\neq 2$ to ensure convergence for all $s\in u(\Omega)$. These same conditions moreover imply that $\overline{G_i|_{\Omega _{s'}} }\equiv \overline{G_i|_{\Omega_s}}$ modulo $\mathfrak{m}_L^k$ if $\overline{s}\equiv\overline{ s'}$ modulo $\mathfrak{m}_L^k$.
	
	Since $\im \overline{\Phi |_{\Omega_s}}$ is not contained in $\overline{ Z}\cap U_{\OO_L/\mathfrak{m}_L^k}$, we find that $\im \Phi|_{\Omega_{s'}}$ is not contained in $Z$. Moreover, for $s' \in V$,
	\[ F \colon \mathfrak{m}_L\overset{\Phi|_{\Omega_{s'}}=(G_i|_{\Omega_{s'}})_i}{\longrightarrow} \mathcal{H}(\OO_L) \overset{f}{\longrightarrow} \OO_L \]
	is a divided powers power series with zero set containing $(\Phi|_{\Omega_{s'}})^{-1}(Z(\OO_L))$ for which $\overline {F}$ is independent of $s'$ and non-vanishing modulo $\mathfrak{m}_L^k$. Applying Lemma \ref{l:newtonpoly}, we then obtain our desired uniform bound.
\end{proof}
We shall later use this lemma in the slightly more general setting of period maps corresponding to summands of those described above.
\section{Hurwitz families}
The existence of a smooth family whose fibres recover the Kodaira--Parshin families constructed by Lawrence--Venkatesh is due to Bertin--Romagny. The following brief description also serves as a sketch of the construction of Kodaira--Parshin families.
\begin{theorem}[Bertin--Romagny \cite{BertinRomagny}] For any finite group $G$, the functor ${\mathscr{H}}\colon (\textrm{RedSch}/\z[1/|G|])^\opp\to \Set$ which assigns to a reduced scheme $S/\z[1/|G|]$ the category of pairs $(C'\to C, G\overset{\sim}{\to} \Aut_{C}(C'))$ where $C/S$ is a smooth proper genus curve of genus $g$ and $C'$ is a $G$-cover ramified at a single point has the structure of a (reduced) Deligne--Mumford stack. If $\mathscr{H}$ is non-empty, the forgetful map $\mathscr{H}\overset{\pi}{\to} \M_{g,1}$ assigning a $G$-cover $(C'\to C)$ to $C$ marked with its ramification divisor is proper, quasi-finite and étale. Moreover, if $G$ is centre-free, then $\pi \colon \mathscr{H}\to \M_{g,1}$ is representable, so that $\pi$ is finite étale.\end{theorem}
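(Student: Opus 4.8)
The plan is to reduce everything to properties of the forgetful map $\pi$, exploiting that $\M_{g,1}$ is already a Deligne--Mumford stack. That $\mathscr{H}$ is a stack for the fppf topology on reduced $\z[1/|G|]$-schemes is essentially formal: smooth proper curves, finite covers and equivariant structures all satisfy fppf descent, and the fibred category in question is limit-preserving. Three structural observations then organise the rest. First, since $|G|$ is invertible the cover $C'\to C$ is tame, so its branch locus is finite étale over the base; the hypothesis that it is a single point in each fibre makes it a section of $C\to S$, which is exactly the marking recording $\pi$. Second, over a geometric point classifying $(C,x)$ the fibre of $\pi$ is the set of $G$-covers of $C$ branched at $x$ carrying the prescribed structure; restricting to $C\setminus\{x\}$ presents this as a set of $G$-torsors classified by homomorphisms $\pi_1^{\mathrm{tame}}(C\setminus\{x\})\to G$ up to conjugacy, hence a finite set because $\pi_1^{\mathrm{tame}}(C\setminus\{x\})$ is topologically finitely generated. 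Third, an automorphism of $(C'\to C,\,\theta\colon G\xrightarrow{\sim}\Aut_C(C'))$ over $\operatorname{id}_{(C,x)}$ is a $C$-automorphism of $C'$ commuting with the $G$-action defined by $\theta$, hence lies in the centraliser of $\theta(G)=\Aut_C(C')$, i.e.\ in $Z(G)$. From the third point the relative inertia of $\pi$ is the constant finite étale group $Z(G)$, so the diagonal of $\pi$ is unramified; thus $\mathscr{H}$ is a Deligne--Mumford stack, $\pi$ is representable precisely when $G$ is centre-free, and it remains to prove $\pi$ quasi-finite, proper and étale.

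These three properties are fppf-local on $\M_{g,1}$, so I would pull back along an étale atlas $T\to\M_{g,1}$ to be given an honest family $C/T$ of smooth proper genus $g$ curves with a section $x$, and (rigidifying away the harmless constant $Z(G)$) study the space $H\to T$ classifying $G$-covers of $C$ branched at $x$. Such a cover is a finite locally free $\OO_C$-algebra of rank $|G|$ with $G$-action, étale away from $x$ and tame along $x$; the algebra-with-$G$-action data is cut out by a locally closed condition inside an affine bundle of multiplication tensors, and ``étale off $x$, tame along $x$'' is open, so $H\to T$ is representable and of finite presentation. Quasi-finiteness is the second point above. For étaleness I would combine topological invariance of the étale site over $C\setminus\{x\}$ with the fact that, étale-locally at $x$, a tame cover is a Kummer model $y^e=u\,t$ with $e\mid|G|$ invertible, which lifts uniquely along any square-zero thickening; hence tame $G$-covers deform uniquely, so $\pi$ is formally étale, and being also of finite presentation and quasi-finite it is étale.

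The real work is properness, which I would establish by the valuative criterion in the form that allows a finite ramified extension of the discrete valuation ring. Given such a ring $R$ with fraction field $K$, an $R$-curve $C$ with section $x$, and a $G$-cover $C'_K\to C_K$ branched at $x_K$, I claim that after enlarging $R$ the normalisation $C'$ of $C$ in $C'_K$ is a finite flat tame $G$-cover of $C$, smooth and proper over $R$, branched exactly along the closure $\overline{x}$ of $x_K$. This rests on the specialisation theory of tame fundamental groups (Grothendieck--Murre): prime-to-residue-characteristic tame covers of a smooth curve over a strictly henselian base are controlled by the geometric fibres, so the cover of $(C\setminus x)_K$ propagates to a tame cover of $C\setminus x$ over $R$, which becomes finite étale after enough ramification of $R$, and Abhyankar's lemma then identifies the normalisation across $\overline{x}$ as the expected Kummer cover while keeping $C'$ smooth. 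The $G$-action and the isomorphism $G\xrightarrow{\sim}\Aut_C(C')$ extend by functoriality of normalisation, and uniqueness of the extension yields separatedness; properness together with quasi-finiteness then upgrades a representable $\pi$ to finite étale.

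I expect this properness argument, together with the bookkeeping it demands, to be the main obstacle: one must fix precisely what ``ramified at a single point'' means in families and under degeneration --- that the branch divisor is a section --- and then deploy the tame specialisation theory and Abhyankar's lemma with care, both to see that the normalised cover stays smooth and $G$-Galois and that no branching proliferates in special fibres (which is exactly why $|G|$ is inverted). This, with the tame deformation theory underlying étaleness, is where the full strength of Bertin--Romagny's construction \cite{BertinRomagny} is needed. Over $\co$ the picture is transparent: $\pi$ is the covering space of $\M_{g,1}$ whose fibre is the finite set of conjugacy classes of surjections $\pi_1(\Sigma_g\setminus\{\mathrm{pt}\})\cong F_{2g}\twoheadrightarrow G$, locally constant as the curve varies, and the content is realising this scheme-theoretically over $\z[1/|G|]$ through tame fundamental groups.
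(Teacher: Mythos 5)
The paper's ``proof'' of this theorem is essentially a citation: Bertin--Romagny's Hurwitz stacks are Deligne--Mumford by construction \cite[Def.~6.2.1]{BertinRomagny}, the forgetful map is proper and quasi-finite by \cite[Prop.~6.5.2~ii)]{BertinRomagny} and \'etale by \cite[Thm.~5.1.5]{BertinRomagny}, with the only original content being the deduction of representability in the centre-free case. Your sketch, by contrast, attempts a from-scratch reconstruction of the input results. Conceptually this is not a different route --- tame fundamental groups, the classification of fibres via $\pi_1^{\mathrm{tame}}(C\setminus\{x\})\to G$, Abhyankar's lemma, and the Grothendieck--Murre specialisation theory are exactly the tools Bertin--Romagny deploy --- but you are filling in what the paper outsources, and you correctly identify the valuative criterion for properness (allowing finite extensions of the DVR, and controlling the normalised cover across the branch section) as the substantive technical step. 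Your key observation for representability, that the relative inertia of $\pi$ is the constant group scheme $Z(G)$ because an automorphism of a $G$-cover commuting with the $G$-action is central, is precisely the argument the paper makes.

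One step you elide that the paper spells out: having trivial inertia only tells you the base change $\mathscr{H}_T$ along a scheme $T\to\M_{g,1}$ is an \emph{algebraic space}. To conclude that $\pi$ is representable (by schemes, so that it makes sense to call it ``finite \'etale'' in the ordinary sense) you still need the fact that an algebraic space which is separated and quasi-finite --- or here, proper and quasi-finite, hence finite --- over a scheme is itself a scheme. You invoke ``proper plus quasi-finite upgrades to finite \'etale'' only at the very end, as if for a map of schemes, but it is needed earlier to get off the ground with algebraic spaces; worth stating explicitly. A second minor point: the theorem is stated for the functor restricted to \emph{reduced} schemes, which reflects genuine subtleties in Bertin--Romagny's construction (their Hurwitz stacks are built via normalisation), and your descent argument for the stack property ignores this restriction entirely. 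Neither of these is a fatal gap in a sketch, but both are places where the actual reference does real work.
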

\begin{proof}In \cite[Def.\ 6.2.1]{BertinRomagny}, Bertin--Romagny introduce more general Hurwitz stacks, which are by construction Deligne--Mumford stacks. The link between these and functor above is explained in \cite[Pf.\ of Thm.\ 2.11]{Marco}. The morphism $\pi \colon \mathscr{H} \to \M_{g,1}$ is shown to be proper and quasi-finite in \cite[Prop.\ 6.5.2 ii)]{BertinRomagny} and is étale by \cite[Thm.\ 5.1.5]{BertinRomagny}. If $G$ is centre-free, since the automorphism group of a $G$-cover is isomorphic to $Z(G)$, the base change of $\mathscr{H}$ by a scheme $T\to \M_{g,1}$ will be a Deligne--Mumford stack fibred in setoids, i.e. an algebraic space. Since $\mathscr{H}_T$ is then proper and quasi-finite over a scheme, it is a finite étale cover of a scheme and so is itself a scheme. In other words, $\pi$ is representable. Note that a morphism of stacks is said to be finite if it is representable and finite in the sense of schemes (i.e.\ proper and quasi-finite).
\end{proof}
Over $\co$, the finite étale property of $\pi\colon \mathscr{H}\to \M_{g,1}$ corresponds to the fact that nearby curves of genus $g$ can be identified and noting that this identifies their covers also.

We shall henceforth assume that $G$ is centre-free. Moreover, we shall assume that $g$ is sufficiently large so that $G$-covers ramified at a single point exist and $\mathscr{H}$ is non-empty (for $\Aff(q)$ this holds for $g\ge 2$). The stack $\mathscr{H}$ comes equipped with a universal genus $g$ curve and ramified $G$-cover. We can summarise this in the sequence
\begin{equation}  C'_{\univ}\overset{\textrm{$G$-cover}}{ \longrightarrow} C_\univ \overset{\textrm{curve}}{ \longrightarrow} \mathscr{H} \overset{\textrm{finite \'etale}}{ \longrightarrow} \M_{g,1} \overset{\textrm{curve}}{ \longrightarrow} \M_g    \label{eq:relKodPar}\end{equation}
(the final map being the universal curve over $\M_g$). Note that, by construction, $C_\univ$ is the pullback of the universal genus $g$-curve $\mathcal{M}_{g,1}\to \M_g$ under the moduli map $\mathscr{H}\to \M_{g,1}\to \M_g$. For any smooth projective genus $g$ curve $Y$, the fibre of \eqref{eq:relKodPar} over the point $[Y]\in \M_g$ recovers the Kodaira--Parshin family given in \cite[Sec.\ 7]{LV}:
\[{ Z\overset{\textrm{$G$-cover}}{ \longrightarrow} Y'\ti Y \overset{\textrm{curve}}{ \longrightarrow} Y' \overset{\textrm{finite \'etale}}{ \longrightarrow}Y} \overset{\textrm{curve}}{ \longrightarrow} [Y] \]
(where the notation $Z\to Y' \to Y$ matches the notation of \cite[Def.\ 7.2]{LV}). As such, \eqref{eq:relKodPar} can be thought of as a relative Kodaira--Parshin family over the base $\M_g$. For our purposes, we require the base to be a fine moduli space, so base change \eqref{eq:relKodPar} by $\M_{g,[3]}\to \M_g$ where $\M_{g,[3]}$ is the moduli scheme over $\z[1/3|G|]$ corresponding to genus $g$ curves with marked full $3$-torsion on their Jacobian. (Note that since $\pi : \mathscr{H}\to \M_{g,1}$ is representable, the base changes of $\mathscr{H}, C_\univ,C'_\univ$ will then be schemes.)

If we specialise to the centre-free group $\Aff(q)$ for some prime $q$, then as in \cite[Sec.\ 7.2]{LV}, we may consider the reduced relative Prym of $C'_\univ \to C_\univ$ as curves over $\mathscr{H}$ and we obtain an abelian-by-finite family over $\M_{g,[3]}$:
\[ \mathcal{A} \overset{\textrm{ab.\ var.}}{\underset{\varpi}{\longrightarrow} }\mathscr{H}_{[3]} \overset{\textrm{finite \'etale}}{ \underset{\pi}{\longrightarrow} } \M_{g,1,[3]} \overset{\textrm{curve}}{ \underset{u}{\longrightarrow} } \M_{g,[3]} . \]
In future sections we shall denote the universal curve $ \M_{g,1,[3]}$ as $\mathcal{X}$ and the moduli space of ramified covers $\mathscr{H}_{[3]}$ by $\mathcal{Y}$. Note, the family is defined over $\z[1/(3q(q-1))]$.

\section{Reduction steps}

We now begin the proof of Theorem \ref{t:main} by reducing to a local statement.

\begin{remark}\label{r:size}For a finite extension $L/\q_p$ with $p\nmid 3q(q-1)$, Lawrence--Venkatesh \cite[Sec.\ 5]{LV} introduce the notion of the ``size'' of points $x\in \calX(\OO_L)$ as a measure of the orbit-size of the fibres $\calY_x$ of the Kodaira--Parshin family of parameter $q$ under the action of Frobenius. Since $\calY\to \calX$ is étale over $\OO_L$, the fibre $\calY_{x,L}$ decomposes as a disjoint union of unramified field extensions $\coprod_i \Spec L_i$. We then set
	\[ \textrm{size}(\calY_{x,L}) = \frac{\displaystyle \sum_{[L_i : L ]<8 } [L_i:L]  }{\displaystyle\sum_i [L_i : L] } . \]
	For our purposes, we shall simply say $x$ has \emph{low size} when some $L_i/L$ has degree at least $8$. This will ensure good variation of the local Galois representation of $\calA_{x,L_i}$.
	
	Lawrence--Venkatesh show that given a number field $F$ and finite set of places $S$, it is always possible to choose $q\ge 3$ such that there is some finite place $v\notin S\cup \{w \mid 3q(q-1) \}$ such that all $\OO_{F,S}$-points of a fixed smooth projective curve $\mathcal{C}/{\OO_{F,S}}$ of genus $g$ have low size \cite[Pf.\ of Thm 5.4]{LV} with respect to $\Frob_v$. Moreover, $v$ can be chosen to satisfy the technical condition of being \emph{friendly}\footnote{For reference: $v$ is said to be friendly if it is unramified and when $F$ contains a CM subfield, the prime that $v$ lies over within the totally real subfield $M^+$ of the maximal CM subfield $M$ of $F$ is inert in $M/M^+$.}. This condition was introduced by Lawrence--Venkatesh \cite[Def.\ 2.7]{LV} in order to remove the assumption that the Galois representations $H^1_\et(\mathcal{Y}_{x, \bar F},\q_p)$ are semisimple and can be ignored if one is willing to apply the semisimplicity result of \cite[Satz 3]{FaltingsSemisimplicity}. In any case, for our purposes friendly can be taken to be a black box. We do however, require slightly more about the existence of $q,v$, which can easily obtained by examining Lawrence--Venkatesh's proof:	
\end{remark}

\begin{lemma}\label{l:size}
	Fix $g\ge 2$, a number field $F$ and finite set of places $S$. Then there exists $q\ge 3$ and $v\notin S\cup \{w \mid 3q(q-1) \}$ such that for any smooth projective curve $\mathcal{C}/\OO_{F,S}$, all points of $\mathcal{C}(\OO_{F,S})$ have low size with respect to $\Frob_v$ and the Kodaira--Parshin family with parameter $q$. Moreover, $v$ can be chosen to be friendly.
\end{lemma}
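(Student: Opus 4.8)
The plan is to deduce the lemma by inspecting \cite[Pf.\ of Thm.\ 5.4]{LV} and recording which quantities the auxiliary pair $(q,v)$ produced there actually depends upon; the essential point to extract is that whether a point $x\in\mathcal{C}(\OO_{F,S})$ has low size at a place $v\notin S\cup\{w\mid 3q(q-1)\}$ depends on $\mathcal{C}$ only through its reduction modulo $v$. By definition this is a statement about the fibre of the Kodaira--Parshin family of parameter $q$ above the reduction of $x$, namely about the finite \'etale $k_v$-scheme $\mathcal{Y}_{x}$ of $\Aff(q)$-covers of $\mathcal{C}_{\overline{k_v}}$ ramified at $\bar x$ together with the action of $\Frob_v$ on its geometric points: the point $x$ has low size precisely when this action has an orbit of length at least $8$. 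As reviewed in \cite[Sec.\ 7]{LV}, this set of covers with its $\Frob_v$-action is determined by the action of $\Frob_v$ on $H^1_\et(\mathcal{C}_{\overline{k_v}},\F_q)$ together with the local datum at the branch point, hence only by the characteristic polynomial of $\Frob_v$ on $H^1_\et(\mathcal{C}_{\overline{k_v}},\q_\ell)$ (for any auxiliary prime $\ell$), the cardinality $\#k_v$, and the residue degree of $\bar x$. For $v\notin S$ these are constrained by the Weil bounds in a way depending only on $g$; in particular nothing about $\mathcal{C}$ beyond $\mathcal{C}\bmod v$ enters, and --- crucially, to avoid circularity --- the number of points of $\mathcal{C}(\OO_{F,S})$ plays no role.

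Granting this, the second step is to revisit the argument of \cite[Pf.\ of Thm.\ 5.4]{LV}, in which, for a fixed curve, a suitable $v$ is produced by showing that for a positive proportion of places the characteristic polynomial of $\Frob_v$ on $H^1_\et(\mathcal{C}_{\overline{k_v}},\q_\ell)$ is sufficiently generic modulo $q$ --- for instance separable, with a root of large order in $\overline{\F}_q^\times$ --- that the induced $\Frob_v$-action on the relevant cover scheme acquires an orbit of length at least $8$ for every branch point. The task is then to confirm that the density lower bound obtained there is uniform in $\mathcal{C}$. One expects this to be the case because the estimate is driven by equidistribution for the geometric monodromy of the universal genus $g$ curve over $\M_g$ (equivalently, by the behaviour of Weil numbers of genus $g$ curves in general) together with effective Chebotarev over $F$: the field $F$ and set $S$ enter only through the requirement $v\notin S$, while the auxiliary prime $q$ is pinned down only by the group-theoretic and combinatorial demands of \cite[Sec.\ 5]{LV}, which make it a function of $g$ alone. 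Since $\mathcal{C}\bmod v$ then ranges over the finite set of genus $g$ curves over $k_v$, one obtains a single positive-density set of places $v$ simultaneously good for every smooth proper genus $g$ curve over $\OO_{F,S}$.

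Finally, one imposes friendliness. By \cite[Def.\ 2.7]{LV} this requires $v$ to be unramified and, if $F$ contains a CM subfield, that the place below $v$ in the maximal totally real subfield of the maximal CM subfield of $F$ be inert in the associated quadratic extension; it therefore holds for a set of places of positive density depending only on $F$. Intersecting this with the positive-density set of good places and discarding the finite set $S\cup\{w\mid 3q(q-1)\}$ yields a place $v$ with all the desired properties, together with a prime $q$ depending only on $F$, $S$, $g$. The genuinely delicate point --- and the main obstacle --- is the uniformity in $\mathcal{C}$ of the density estimate in \cite[Pf.\ of Thm.\ 5.4]{LV}; by contrast the description of the cover scheme, the orbit-length bookkeeping, and the friendliness condition are all manifestly curve-independent.
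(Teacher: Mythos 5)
Your overall strategy --- reread the proof of \cite[Thm.\ 5.4]{LV} and check that the choice of $(q,v)$ made there does not secretly depend on $\mathcal{C}$ --- is the right one, and it is what the paper does. But your execution has a genuine gap, which you yourself flag as ``the main obstacle'' and then only address with ``one expects this to be the case.'' The paper's proof is much more direct: in Lawrence--Venkatesh's argument, $q$ is required to satisfy three conditions (i)--(iii) and $v$ three further conditions (i')--(iii'), and each of these conditions is stated purely in terms of $g$, $F$ and $S$, with no reference to $\mathcal{C}$ or to the point $x$; condition (i') is exactly friendliness. Since pairs $(q,v)$ satisfying all six conditions are shown to exist, any such pair automatically works for every curve and every point simultaneously. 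There is no per-curve density estimate that needs to be made uniform.

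By contrast, your proposed route does not close the gap it opens. You argue (correctly, but this is not the crux) that low size at $v$ depends on $\mathcal{C}$ only through $\mathcal{C}\bmod v$, and then try to pass from ``for each $\mathcal{C}$, a positive density of places is good'' to ``some single $v$ is good for all $\mathcal{C}$'' by noting that genus-$g$ curves over $k_v$ form a finite set. That inference is invalid: the finite set of curves over $k_v$ changes with $v$, and an intersection of positive-density sets indexed by infinitely many curves (or by all curves over all residue fields) need not be nonempty; what you would actually need is a single $v$ for which \emph{every} genus-$g$ curve over $k_v$ is good, and nothing in your argument produces that. The appeal to equidistribution of monodromy over $\M_g$ and effective Chebotarev is also not how \cite{LV} proceeds and would be a substantially harder (and here unnecessary) input. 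The fix is simply to observe that Lawrence--Venkatesh's conditions are curve-independent by inspection, so no uniformisation step is required.
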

\begin{proof}
In the proof of \cite[Thm 5.4]{LV}, they prove that $x \in \mathcal{C}(\OO_{F,S})$ has low size if $q$ is chosen to satisfy three conditions (i)-(iii), and $v$ is chosen to simultaneously satisfy three other conditions (i')-(iii'). Each of these conditions depends on $g,F,S$ but not the choice of $\mathcal{C}$ or $x \in \mathcal{C}(\OO_{F,S})$. Since, pairs $q,v$ satisfying these conditions are shown to exist, such choices will ensure low size for all points of all $\mathcal{C}$. Condition (i') is precisely that $v$ be friendly.
\end{proof}
 As a result, we can use size to impose a condition on local fields $L$: we say that $L$ satisfies the \emph{size condition of Lawrence--Venkatesh} with respect the Kodaira--Parshin family of parameter $q$ if all points of $\mathcal{X}(\OO_L)$ have low size with respect to $\Frob_L$.

\begin{prop}\label{p:reduct}
	Fix a choice of integer $q>0$ and unramified local field $L/\q_p$ with $p\nmid 3q(q-1)$ and for which $L$ satisfies the size condition of Lawrence--Venkatesh with respect to the Kodaira--Parshin family of parameter $q$. Then for any choice of residue disk $\Omega$ of $\X(\OO_{L})$, there exists a constant $N'=N'(L,g,\Omega)$ such that for any $s \in \pi(\Omega)$ and $\psi\colon G_L\to \GL_{2g}(L)$,
	\[\{ x \in  \Omega_s \subseteq\calX_s(\OO_{L}) \mid \rho_x \cong \psi \}\le N'. \]
	
\end{prop}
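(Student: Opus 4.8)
The plan is to deduce this from Lemma \ref{l:abstract} applied to the abelian-by-finite family $\mathcal{A}\to \mathcal{Y}=\mathscr{H}_{[3]}\to \mathcal{X}=\M_{g,1,[3]}\to\M_{g,[3]}$ constructed in the previous section, after a compactness argument to upgrade the "for nearby $s'$" in Lemma \ref{l:abstract} to a global statement over $\pi(\Omega)$. First I would recall that, by the size condition and Lawrence--Venkatesh's analysis of the Kodaira--Parshin family of parameter $q$ (which we may invoke as a black box, exactly as indicated in the excerpt), for each point $x\in\Omega$ the fibre $\calY_{x,L}$ has a component $\Spec L_i$ with $[L_i:L]\ge 8$, and the corresponding summand $H^1_\et(\calA_{x,L_i},\q_p)$ of the cohomology of the Prym is crystalline with associated filtered $F$-isocrystal whose Hodge filtration is recorded by a period map $\Phi|_{\Omega_s}$ into the appropriate (completed) period domain $\H$. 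The key geometric input from \cite{LV} is that for any $s\in\pi(\Omega)$ the image of $\Phi|_{\Omega_s}$ is \emph{not} contained in the closed algebraic subspace $Z\subseteq\H$ cut out by the condition of defining a filtered $F$-isocrystal isomorphic to a fixed one, equivalently the $Z(\phi^{[L:\q_p]})$-orbit; and that $\{x\in\Omega_s\mid \rho_x\cong\psi\}$ is contained in $(\Phi|_{\Omega_s})^{-1}(Z(\OO_L))$. This is precisely the hypothesis needed to run Lemma \ref{l:abstract}.

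Next I would address uniformity over $s$. Lemma \ref{l:abstract} as stated gives, for each base point $s\in u(\Omega)$, a neighbourhood $U_s$ and a uniform bound on $(\Phi|_{\Omega_{s'}})^{-1}(Z(\OO_L))$ for $s'\in U_s$. Since $\pi(\Omega)\subseteq\M_{g,[3]}(\OO_L)$ — or rather the relevant residue disk of it — is contained in a compact $p$-adic space (the residue disk $\m_L^{\times r}$ being compact, or more precisely the image $u(\Omega)$ being a closed subspace of a compact polydisk), we may cover it by finitely many of the neighbourhoods $U_{s_1},\dots,U_{s_\ell}$; taking $N'$ to be the maximum of the finitely many resulting bounds gives the desired constant depending only on $L$, $g$ and $\Omega$. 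I should be slightly careful that Lemma \ref{l:abstract} is applied "in the slightly more general setting of period maps corresponding to summands", as flagged at the end of Section 3: here the relevant summand is $H^1_\et(\calA_{x,L_i},\q_p)$ for the large-degree component $L_i$, but since the decomposition into components is locally constant on $\Omega$ (as $\mathcal{Y}\to\mathcal{X}$ is finite étale), on a sufficiently small residue disk — which we may assume $\Omega$ to be, or else partition $\Omega$ into finitely many such — the component structure and hence the summand period map is well defined and varies in the family exactly as in Lemma \ref{l:abstract}.

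The main obstacle is not any single hard estimate but rather the bookkeeping of descending the abstract statement to the concrete one: namely checking that the period map attached to the low-size summand genuinely fits the hypotheses of Lemma \ref{l:abstract} (smoothness of $u$, the summand period map extending to a family over the base $\M_{g,[3]}$, convergence on the whole residue disk using $L$ unramified and $p\neq 2$), and that the non-containment $\im\Phi|_{\Omega_s}\not\subseteq Z$ holds uniformly for \emph{all} $s\in\pi(\Omega)$ and \emph{all} $\psi$ — the latter being exactly the content of the low-size hypothesis together with Lawrence--Venkatesh's period-image computation, which is why the size condition is built into the hypotheses of the proposition. Once these are in place, the compactness argument and Lemma \ref{l:newtonpoly} (via Lemma \ref{l:abstract}) finish the proof.
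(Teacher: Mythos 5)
Your overall strategy --- apply Lemma \ref{l:abstract} to the period map of the large-degree summand and then use compactness of $u(\Omega)$ to make the bound uniform in $s$ --- is the same as the paper's, which packages exactly this as Lemma \ref{l:uniformpreimagePhi1}. But there is a genuine gap: as written you only obtain a bound that is uniform in $s$ for a \emph{fixed} target subvariety $Z$, whereas the proposition requires $N'$ to be independent of $\psi$. As $\psi$ varies, so does the relevant subvariety: it is the orbit $Z\cdot h$ of a point $h\in \H_1(\OO_L)$ whose filtered $F$-isocrystal matches the candidate summand of $\psi$, and there are infinitely many such $h$. The bound produced by Lemma \ref{l:abstract} via Lemma \ref{l:newtonpoly} depends on the reduction $\overline{F}$ of the composite $f\circ \Phi_1|_{\Omega_{s'}}$, hence on the choice of $k$ and of the function $f$ cutting out $Z$, both of which depend on $Z$. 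Observing that the non-containment $\im \Phi_1|_{\Omega_s}\not\subseteq Z\cdot h$ holds for every $\psi$ is correct (it follows from Zariski density of the image together with $\dim (Z\cdot h)_L<\dim(\H_1)_L$), but it does not by itself make the \emph{bound} uniform. The paper closes this with a second compactness argument, now over $\H_1(\OO_L)$: for $h'$ in a fixed residue class modulo $\mathfrak{m}_L^k$ one has $\overline{Z\cdot h'}=\overline{Z\cdot h}$, so the defining functions $f_{h'}$ can all be chosen to lift a single $\bar f$, forcing $\overline{F}$ to be independent of $h'$ on that class; finitely many classes then suffice. You need to add this step.

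A smaller omission: passing from ``$\rho_x\cong\psi$'' to ``$\Phi_1(x)$ lies in a fixed orbit $Z\cdot h$'' requires extracting from $\psi$ the possible isomorphism classes of the $G_{F_1}$-summand $H^1_\et(\calA_{x,\bar F_1},\q_p)$. The paper does this via Krull--Schmidt (finitely many induced summands $W$ of $\psi$ of the right dimension) and Frobenius reciprocity (finitely many $M$ with $\Ind^{G_L}_{G_{F_1}}M\cong W$), yielding a bounded finite collection of candidate orbits rather than a single one. This is routine, but it is the step that converts the condition on the global representation into finitely many conditions on the period image, so it should be stated.
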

In other words, the fibres of the local period mapping on each $\mathcal{X}_s$ are bounded and this bound is independent of $\mathcal{X}_s$. In addition to this local statement, we require uniform control of the number of points $x$ for which $\rho_x$ is not semisimple. This can either come from the proof by Faltings that in fact all $\rho_x$ are semisimple \cite[Satz 3]{FaltingsSemisimplicity} or from further analysis of the period map:
\begin{prop}\label{p:uniformss}
	Let $F$ be a number field and $S$ be a finite set of primes. Then there exists a constant $N''=N''(F,S,g)$ such that
	\[\{ x \in  \mathcal{C}(\OO_{F,S})  \mid \rho_x \textrm{ is not semisimple}\}\le N'' \]
	for all smooth projective curves $\mathcal{C}/\OO_{F,S}$.
\end{prop}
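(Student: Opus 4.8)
The plan is to run once more the reduction underlying Proposition~\ref{p:reduct}, but feeding Lemma~\ref{l:abstract} the closed subvariety of the period domain that Lawrence--Venkatesh attach to \emph{non-semisimple} representations (through the friendly condition) in place of the one used for the Mordell Conjecture. First I would invoke Lemma~\ref{l:size} to fix, depending only on $F$, $S$ and $g$, an integer $q\ge 3$ and a friendly place $v\notin S\cup\{w\mid 3q(q-1)\}$ such that every point of $\mathcal{C}(\OO_{F,S})$ has low size with respect to $\Frob_v$ and the Kodaira--Parshin family of parameter $q$, uniformly over all smooth projective genus $g$ curves $\mathcal{C}/\OO_{F,S}$. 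Since $v\nmid 3$ and such a $\mathcal{C}$ has good reduction at $v$, after a finite unramified base change of degree bounded in terms of $g$ it acquires a full level-$3$ structure and so defines a point $s=[\mathcal{C}]$ of $\M_{g,[3]}$ over one of the finitely many unramified extensions of $F_v$ that can occur; arranging (a finite list of local conditions, of the kind imposed in \cite{LV}) that the size condition of Lawrence--Venkatesh holds over each of these, I write $L$ for any one of them, thereafter pretend the level structure is already defined over $\OO_L$, and fold the finitely many cases into the final constant. Then $\calX_s$ is the curve $\mathcal{C}$ itself, $\mathcal{C}(\OO_{F,S})\hookrightarrow\mathcal{C}(\OO_L)=\calX_s(\OO_L)$, and because $L$ satisfies the size condition the Lawrence--Venkatesh analysis applies at every point of every fibre $\calX_{s'}$, $s'\in\M_{g,[3]}(\OO_L)$; throughout I work with the summand of the period data attached to a large factor of $\calY_x$, as in the generalisation noted after Lemma~\ref{l:abstract}.

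The arithmetic heart, which I would import wholesale from \cite{LV}, is that the friendliness of $v$ (introduced there precisely to dispense with semisimplicity) together with the low-size hypothesis makes non-semisimplicity \emph{visible to the local period map}: there is a proper closed algebraic $\OO_L$-subspace $Z'\subsetneq\mathcal{H}$ with the two properties that (i) for every $x\in\mathcal{C}(\OO_{F,S})$ for which $\rho_x$ is not semisimple as a $G_F$-representation, the period map $\Phi$ sends $x$ into $Z'(\OO_L)$; and (ii) for every residue disk $\Omega\subseteq\calX(\OO_L)$ and every $s'\in u(\Omega)$, the restriction $\Phi|_{\Omega_{s'}}$ does not have image contained in $Z'$. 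Informally, $Z'$ parametrises the filtered isocrystals splitting off a sub- or quotient isocrystal of the shape forced by a non-split local extension; property (ii) is the same non-containment statement --- here automatically one of proper intersection, as $\calX_{s'}$ is a curve --- that underlies the Mordell case.

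Granting such a $Z'$, the remainder is formal. Fix a residue disk $\Omega\subseteq\calX(\OO_L)$; there are only finitely many of these, since $\calX(k_L)$ is finite. For each $s'\in u(\Omega)$, property (ii) and Lemma~\ref{l:abstract} applied with $Z=Z'$ provide a neighbourhood $U_{s'}$ of $s'$ together with a bound on $\#(\Phi|_{\Omega_{s''}})^{-1}(Z'(\OO_L))$ valid for all $s''\in U_{s'}$. Since $u(\Omega)$ is the continuous image of the compact residue disk $\Omega$ it is compact, so finitely many of the $U_{s'}$ cover it, and the maximum of the corresponding bounds is a constant $N'''=N'''(L,g,\Omega)$ for which $\#\{x\in\Omega_{s'}:\Phi(x)\in Z'(\OO_L)\}\le N'''$ for every $s'\in u(\Omega)$. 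By (i), any $x\in\mathcal{C}(\OO_{F,S})$ with $\rho_x$ not semisimple lies in $\{y\in\Omega_s:\Phi(y)\in Z'(\OO_L)\}$ for $s=[\mathcal{C}]$ and one of the finitely many residue disks $\Omega$, and is therefore counted by $N'''(L,g,\Omega)$; summing over $\Omega$ gives the desired constant $N''=N''(F,S,g):=\sum_\Omega N'''(L,g,\Omega)$.

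I expect the second step to be the real obstacle: extracting from \cite{LV} a usable statement of $Z'$ together with properties (i) and (ii), since this is precisely the part of Lawrence--Venkatesh's argument that is entangled with the friendly condition and the removal of the semisimplicity assumption. The remaining delicate but routine point is the bookkeeping in the first step, ensuring that every auxiliary field and level structure is drawn from a finite list depending only on $F$, $S$ and $g$ so that $N''$ is genuinely uniform. Of course, if one is willing to invoke Faltings' theorem \cite{FaltingsSemisimplicity} that every $\rho_x$ is semisimple, the non-semisimple locus is empty and the proposition holds trivially with $N''=0$.
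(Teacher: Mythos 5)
Your overall strategy --- fix $q$ and a friendly place $v$ via Lemma~\ref{l:size}, pass to a field with level structure, find a proper closed subspace of the period domain into which the problematic points land, then feed it to Lemma~\ref{l:abstract}, cover $u(\Omega)$ by compactness, sum over residue disks --- is essentially what the paper does, and the formal bookkeeping at the end is fine. The gap is in the second step: you posit a single proper closed $Z'\subsetneq\mathcal{H}$ with the property that $\rho_x$ non-semisimple over $G_F$ forces $\Phi(x)\in Z'(\OO_L)$, and this is not what Lawrence--Venkatesh give you, nor is it clear how to produce such a $Z'$.

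Concretely: the representation splits as $\rho_x\cong\bigoplus_i\Ind^{G_L}_{G_{K_{i,w}}}\rho_{y_i}$ over the factors of the \'etale cover, and the Lawrence--Venkatesh ``bad'' locus only has the two properties you need --- being proper in the relevant factor $\mathcal{H}_{i,w}$, and capturing non-semisimplicity via the period image --- for factors with $[K_{i,w}:K_v]\ge 8$. But $\rho_x$ failing to be semisimple only means \emph{some} summand $\rho_{y_i}$ is non-semisimple, and that summand can perfectly well be one where $[K_{i,w}:K_v]<8$; for such $i$ the bad locus need not be proper, and the criterion from \cite[Sublem.\ of Lem.\ 6.1]{LV} (the paper's Proposition~\ref{p:badliesinbad}) does not apply, so $\Phi(x)$ need not land in any useful $Z'$. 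Your property~(i) is therefore false for the $Z'$ you have in mind, and the argument does not close.

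The paper sidesteps this by a dichotomy you are missing. For each $x$, either \emph{some} large factor $\rho_{y_i}$ ($[K_{i,w}:K_v]\ge 8$) is semisimple --- in which case Faltings--Serre leaves only finitely many possibilities for it, and the Proposition~\ref{p:reduct}-style argument applied to the corresponding factor $\Phi_i$ bounds these $x$ uniformly; or \emph{every} large factor $\rho_{y_i}$ is non-semisimple (condition~(*) in the paper) --- and it is only under this stronger hypothesis, together with friendliness, that Lawrence--Venkatesh's sublemma places $\Phi_{i,w}(x)$ in the proper bad subspace $\mathcal{H}_{i,w}^{\mathrm{bad}}$ for some large $(i,w)$, letting Lemma~\ref{l:uniformpreimagePhi1}(ii) finish. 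The union of these two cases covers all $x$, in particular all non-semisimple ones, which is why the paper is actually bounding more than just the non-semisimple locus. Your one-$Z'$ shortcut collapses this into a single case and loses exactly the points where the non-semisimplicity hides in a small factor. (A minor further point: your local base change to acquire level-$3$ structure is more awkward than the paper's global passage to the compositum $K$ with $S_K$, since you then have to argue the $\OO_{F,S}$-points of $\mathcal{C}$ inject into $\OO_L$-points of a curve defined over $\OO_L$, and you need a clean global setting for the Faltings--Serre input; but that is fixable. The missing dichotomy is the real issue.)
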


\begin{proof}[Proof of Thm.\ \ref{t:main} assuming Prop.\ \ref{p:reduct}, \ref{p:uniformss}] Let $\mathcal{C}/\OO_{F,S}$ be a smooth projective curve of genus $g$. Then $\Jac(\mathcal{C})$ is an abelian variety/$\OO_{F,S}$. In particular, it has good reduction outside $S$, so $\Jac(\mathcal{C})$ attains full $3$-torsion over an extension of $F$ of degree at most $|\GSp_{2g}(\F_3)|$ unramified outside of $\{\p \mid 3 \} \cup S$. There are finitely many such fields (Hermite's Theorem) and so their compositum $K$ is a number field. If $S_K$ denotes the primes of $K$ lying over $S$, then $\mathcal{C}$, non-uniquely, defines an $\OO_{K,S_K}$-point $s\in \M_{g,[3]}(\OO_{K,S_K})$. Since $K$ is independent of $\mathcal{C}$, it suffices to bound $K$-rational points on curves with full $3$-torsion on their Jacobian and good reduction outside $S_K$.

	By Lemma \ref{l:size}, there is some $q\ge 3$ and choice of prime $v\notin S_K\cup \{w \mid 3q(q-1)\}$ for which $K_v$ satisfies the Lawrence--Venkatesh size condition with respect to the Kodaira--Parshin family of parameter $q$. Applying Proposition \ref{p:reduct} with $L=K_v$, we find that the number of points in a residue disk with fixed Galois representation is bounded.
	
	Let $v$ lie over $p$. Then for all $x\in \calX(\OO_{K,S_K})$, the representation  $H^1_\et(\calA_{x,\bar K},\q_p)=$\linebreak[4] $\bigoplus_{\Spec F_i \in \calY_x} \Ind^{G_K}_{G_{F_i}} H^1_\et (\calA_{x, \bar F_i},\q_p) $ is unramified outside $S_K\cup \{ w | p\}\cup \{ w | q(q-1)\}$. At this stage we may either apply \cite[Satz 3]{FaltingsSemisimplicity} to deduce all the $\rho_x$ are semisimple, or, for a more Lawrence--Venkatesh style argument, apply Prop.\ \ref{p:uniformss} to deduce the number of points $x$ for which $\rho_x$ is not semisimple is finite and uniformly bounded independent of $\mathcal{C}$. In any case, the Faltings--Serre method \cite[V.2]{FaltingsWustholz} then shows that, as $x$ runs over $\calX(\OO_{K,S_K})$, there are only finitely many isomorphism classes of $G_K$-representations taken by $\rho_x$. As a result, it suffices to separately bound the number of points for which $\rho_x$ restricts to a fixed isomorphism class of $G_{L}$-representations.

	Since the number of points on smooth projective curves over the residue field $k_L$ is bounded depending only on $g$, the number of residue disks of $\mathcal{C}$ is bounded independently of $\mathcal{C}$ and we obtain the desired uniform bound $N$.
\end{proof}

\section{Local argument}
It remains to manoeuvre ourselves into a position to apply Lemma \ref{l:abstract}. In the notation of Proposition \ref{p:reduct}, fix a residue disc $\Omega\subseteq \mathcal{X}(\OO_L)$ and basepoint $x_0 \in \Omega$. Let $E_0$ denote the $\OO_L$-algebra of regular functions on $\mathcal{Y}_{x_0}$. As $\pi$ is étale, $\mathcal{Y}_{x_0}=\Spec E_0$ decomposes as $\coprod_{i=1}^k y_i = \coprod_{i=1}^k \Spec \OO_{F_i}$ where $F_i/L$ are finite unramified extensions. The fact that $\pi$ is étale also ensures $\mathcal{Y}_x$ is identified with $\mathcal{Y}_{x_0}$ for all $x\in \Omega$.

For all $x\in \Omega$, via the Gauss--Manin connection, $H^1_\dR(\mathcal{A}_{x}/\OO_L) $ is canonically identified with $H^1_\dR(\mathcal{A}_{x_0}/\OO_L)$ as $\OO_L$-modules. Since $\pi$ is étale, the ring of regular functions on $\mathcal{Y}_x$ is similarly canonically identified as $x$ varies in $\Omega$. Now consider $H^1_\dR(\mathcal{A}_{x}/\OO_L) $ as a $H^0_\dR(\mathcal{A}_x/\OO_L)$-module. Since the identification of regular functions on $\mathcal{Y}_x$ and $\YY_{x_0}$ can also be seen as the Gauss--Manin connection on $H^0_\dR(\mathcal{A}_x/\OO_L)$, the identification of $H^1_\dR(\mathcal{A}_{x}/\OO_L) \cong H^1_\dR(\mathcal{A}_{x_0}/\OO_L)$ is in fact an identification of $E_0$-modules in the obvious sense. Moreover, this respects the polarisation $\omega$ on $\calA$ obtained from the Prym construction, which is of degree invertible in $\OO_L$.

As in explained in Section \ref{s:background}, we can record the variation of the Hodge filtration of $H^1_\dR(\mathcal{A}_{x}/\OO_L)$ within $H^1_\dR(\mathcal{A}_{x_0}/\OO_L)$ to obtain a period map
\[ \Phi \colon \Omega \to \H(\OO_L),  \]
which by the previous paragraph can be considered to take values within the (compact) period domain $\H:=\Res^{E_0}_{\OO_L} \LGr(H^1_\dR(\mathcal{A}_{x_0}/\OO_L),\omega)$ whose $\OO_L$-points correspond to maximal isotropic subspaces of $H^1_\dR(\calA_{x_0}/\OO_L)$. Since the polarisation, $E_0$-action etc.\ exist modulo $\m_L^k$, exactly as in Section \ref{s:background}, the reduction modulo $\m_L^k$ of $\Phi$ coincides with the corresponding mod $\m_L^k$ period map and $\H$ is a smooth $\OO_L$-scheme. Of course, $\H$ splits as a product of period domains $\H_i$ corresponding to the decomposition of $E_0$. The projection $\Phi_i(x)$ of $\Phi(x)$ to $\H_i$ recovers the filtered $F$-isocrystal $(H^1_\crys(\mathcal{A}_{y_i}/F_i),\phi ,F^\bullet  ) $ associated to $H^1_\et(\mathcal{A}_{\bar F_i}, \q_p)$ as a $G_{F_i}$-representation by $p$-adic Hodge theory. Since induction of Galois representations corresponds to restriction of scalars of isocrystals, the filtered $F$-isocrystal corresponding to $\rho_x=\bigoplus_i \Ind^{G_{L}}_{G_{F_i}}H^1_\et(\mathcal{A}_{x,\bar F_i}, \q_p)$ is then the direct sum of these isocrystals described above considered with $L$-coefficients.

Now assume that $L$ satisfies the Lawrence--Venkatesh size condition for our family, so that without loss of generality $[F_1:L]\ge 8$. This will ensure that the Galois representations arising from the Kodaira--Parshin family varies as $x$ varies within a single curve even upon restriction to the summands parametrised by $\H_1$. Let $Z(\phi^{[L:\q_p]})\subseteq \Res_{\OO_L}^{\OO_{F_1}}\GL(H^1_\dR(\calA_{y_{1}}/\OO_{F_1}))$ be the $\OO_L$-subscheme defined by $\OO_{F_1}$-linear automorphisms which centralise the now linear map $\phi^{[L:\q_p]}$. For $h\in \H_1(\OO_L)$, we write $Z\cdot h$ for the intersection of $\H_1$ and the scheme theoretic image of
\[Z\overset{\cdot h}{\rightarrow}\Res_{\OO_L}^{\OO_{F_1}}\Gr(H^1_\dR(\calA_{y_{1}}/\OO_{F_1})). \]
 By definition, $Z\cdot h$ is a closed $\OO_L$-subscheme of $\H_1$ and has the property that any $h'\in \H_1(\OO_L)$ which defines a filtered $F_1$-isocrystal isomorphic to that defined by $h$ lies within $(Z\cdot h)(\OO_L)$. Note we are not assuming that $h$ and $h'$ need be related by automorphisms preserving the pairing.

\begin{theorem}[Lawrence--Venkatesh]\label{t:LVmain}	Fix a choice of integer $q>0$ and unramified local field $L/\q_p$ with $p\nmid 3q(q-1)$ and for which $L$ satisfies the size condition of Lawrence--Venkatesh with respect to the Kodaira--Parshin family of parameter $q$. Choose a basepoint $x_0\in \X(\OO_L)$ with residue disk $\Omega$ and let $s\in u(\Omega)$ be a choice of marked smooth projective curve of genus $g$. The projection $\Phi_1\colon \Omega_s \to \H_1(\OO_L)$ of the period mapping to a factor corresponding to $F_i/L$ of degree $\ge 8$ has Zariski dense image, whilst $\dim_{\q_p} ((Z\cdot h)_L)<\dim_{\q_p}((\H_1)_L)$ for all $h \in \H_1$.
\end{theorem}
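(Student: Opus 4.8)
The plan is to import Lawrence–Venkatesh's argument wholesale, checking only that their hypotheses are satisfied in the present family-theoretic setting. The theorem has two assertions: (a) Zariski-density of $\im \Phi_1$, and (b) the dimension bound $\dim_{\q_p}((Z\cdot h)_L) < \dim_{\q_p}((\H_1)_L)$ for all $h$. Part (b) is essentially a statement about the orbit structure of the centraliser $Z(\phi^{[L:\q_p]})$ acting on the Lagrangian Grassmannian $\H_1 = \Res^{\OO_{F_1}}_{\OO_L}\LGr(H^1_\dR(\calA_{y_1}/\OO_{F_1}),\omega)$; since $Z\cdot h$ is the scheme-theoretic image of the orbit map $Z \to \Res^{\OO_{F_1}}_{\OO_L}\Gr(H^1_\dR(\calA_{y_1}/\OO_{F_1}))$ followed by intersection with $\H_1$, its dimension is at most $\dim Z - \dim(\text{stabiliser of }h)$, and one needs this to be strictly less than $\dim \H_1$. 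First I would recall that for a $q$-parameter Kodaira–Parshin family the $F$-isocrystal $(H^1_\crys(\calA_{y_1}/F_1),\phi)$ decomposes under the residual $\Aff(q)$-action (or rather the $\mathbb{F}_q^\times$-action on the Prym) into isotypic pieces indexed by nontrivial characters of $\mathbb{F}_q^\times$; Lawrence–Venkatesh compute in \cite[Sec.\ 7, Sec.\ 4]{LV} that on each such piece $\phi^{[L:\q_p]}$ acts with enough distinct eigenvalues (a Weil-number / $p$-adic estimate argument) that the centraliser is small — small enough that the generic orbit has codimension $\ge 1$ in $\H_1$. This is purely a statement about the special fibre $y_1$, hence unchanged when the family is assembled over $\M_{g,[3]}$, so (b) follows verbatim.

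For part (a), the Zariski-density of the image of $\Phi_1$, I would again follow \cite[Sec.\ 7]{LV}: one shows the image is not contained in any proper closed subvariety by a Griffiths-transversality/infinitesimal argument. Concretely, the derivative of the period map at $x_0$ is governed by the Kodaira–Spencer map of the family, and Lawrence–Venkatesh verify that for the Kodaira–Parshin family the Higgs field is sufficiently nondegenerate that $\Phi_1$ is a submersion at generic points of $\Omega_s$ (after passing to the relevant summand, using that $[F_1:L]\ge 8$ ensures the Galois representation genuinely varies — this is exactly the role of the size condition). Since $\Omega_s$ is a one-dimensional residue disc and $\H_1$ can have dimension $>1$, "submersion" is too strong as stated; what is actually needed and proved in \cite{LV} is that the image is not contained in any translate $Z\cdot h$, and more precisely that the image is Zariski-dense, which Lawrence–Venkatesh deduce from computing that the image spans in the appropriate sense — the key input being their Lemma on the nonvanishing of the relevant period coordinates. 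The only thing to check here is that the base point $s\in u(\Omega)$, being an arbitrary marked genus-$g$ curve, is one for which their argument applies uniformly: but their argument works for \emph{every} smooth projective genus-$g$ curve (that is the content of their proof of Mordell), so no issue arises.

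The step I expect to be the main obstacle is purely expository rather than mathematical: making precise that the summand $\H_1$ and the centraliser $Z(\phi^{[L:\q_p]})$ appearing here, which are attached to one component $F_1$ of the étale algebra $E_0 = \Gamma(\OO_L, \calO_{\calY_{x_0}})$, correspond exactly to the objects Lawrence–Venkatesh study — i.e.\ that restricting scalars along $\OO_L \to \OO_{F_1}$ on the isocrystal side matches inducing along $G_{F_1}\hookrightarrow G_L$ on the Galois side, and that the "size $\ge 8$" condition transported through this correspondence is precisely the genericity they require. Once this dictionary is in place, both (a) and (b) are literally \cite[Thm.\ 7.?]{LV} applied to the fibre over $s$, and the theorem follows. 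I would therefore structure the proof as: (i) recall the decomposition of $\H$ and of $\rho_x$ into $F_i$-pieces and match it with \cite{LV}; (ii) cite \cite{LV} for the dimension bound (b), noting it depends only on the special fibre $y_1$; (iii) cite \cite{LV} for Zariski-density (a), noting it holds for the fibre over any $s$ since it holds for every genus-$g$ curve.
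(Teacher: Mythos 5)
Your proposal takes the same broad route as the paper: both conclusions are imported from Lawrence--Venkatesh applied fibre-by-fibre, using the crucial observation that their arguments hold for \emph{every} smooth projective genus-$g$ curve, so nothing new needs to be checked as $s$ varies. For the dimension bound $\dim_{\q_p}((Z\cdot h)_L)<\dim_{\q_p}((\H_1)_L)$, your sketch --- a small centraliser of $\phi^{[L:\q_p]}$ via a Weil-number/eigenvalue argument --- is the content of [LV, Pf.\ of Lem.\ 6.2], which is exactly what the paper cites, and your remark that it depends only on the special fibre is the right justification for why it survives assembly into a family.

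Where your proposal has a genuine gap is in part (a): you never name the actual input for Zariski density. You describe an infinitesimal Kodaira--Spencer/Higgs-field argument, then correctly observe that ``submersion at generic points'' cannot force Zariski density when $\Omega_s$ is one-dimensional and $\H_1$ typically is not, and then retreat to the vague ``the image spans in the appropriate sense.'' The mechanism the paper uses is \emph{full monodromy} [LV, Thm.\ 8.1]: the monodromy group of the abelian-by-finite family over $\mathcal{X}_s$ is as large as possible, which forces the Zariski closure of $\im \Phi|_{\Omega_s}$ to be all of $\H$, and this persists after projection to the factor $\H_1$. This is a global, group-theoretic statement about the image of $\pi_1$, not an infinitesimal one at a point, and it is precisely what lets a one-dimensional source have Zariski-dense image in a higher-dimensional period domain. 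Replacing your Kodaira--Spencer language with a citation to the full-monodromy theorem closes the gap; the rest of your reduction (matching $F_1$-pieces to LV's setup, size $\ge 8$ playing the role of their genericity condition) is correct and is how the paper proceeds.
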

\begin{proof}
	The Kodaira--Parshin family over $\mathcal{X}_s$ has full monodromy \cite[Thm.\ 8.1]{LV}, so the image of $\Phi|_{\Omega_s}$ is Zariski dense in ${\mathcal{H}}$ and this remains true after projection. The second statement is as in \cite[Pf.\ of Lem.\ 6.2]{LV}.
\end{proof}

The heart of the argument is now given by:

\begin{lemma}\label{l:uniformpreimagePhi1}\begin{enumerate}[i)]
		\item Let $h\in \H_1(\OO_L)$ and $s \in u(\Omega)$. Then the preimage of $(Z\cdot  h)(\OO_L)$ under the period map
		\[ \Phi_1|_{\Omega_s} \colon \Omega_s \to \H_1(\OO_L)  \]
		is uniformly bounded independent of $h$ and $s$.
		\item Let $s \in u(\Omega)$ and $Z'\subsetneq \mathcal{H}_1$ be any proper closed subvariety, then the preimage $( \Phi_1|_{\Omega_s} ) ^{-1}(Z'(\OO_L))$ is uniformly bounded independent of $s$.
	\end{enumerate}
\end{lemma}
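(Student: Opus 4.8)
The plan is to deduce both parts from Lemma~\ref{l:abstract} applied to a suitable family and a suitable closed subvariety, using the key input of Theorem~\ref{t:LVmain} to verify its hypotheses. For part ii), the natural move is to apply Lemma~\ref{l:abstract} directly with $\mathcal{M}=\M_{g,[3]}$, $\mathcal{X}=\M_{g,1,[3]}$, the period map taken to be $\Phi_1$ (a summand of the full period map, which the remark after Lemma~\ref{l:abstract} notes is permitted), and $Z=Z'$. By Theorem~\ref{t:LVmain}, $\Phi_1|_{\Omega_s}$ has Zariski dense image in $\H_1$, so in particular its image is not contained in the proper closed subvariety $Z'$; Lemma~\ref{l:abstract} then produces a neighbourhood $U$ of $s$ on which $(\Phi_1|_{\Omega_{s'}})^{-1}(Z'(\OO_L))$ is uniformly bounded. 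Since $u(\Omega)$ is a residue disk over $k_L$, and $\M_{g,[3]}(\OO_L)$ has finitely many residue disks (boundedly many, depending only on $g$), one covers $u(\Omega)$ by finitely many such $U$'s; but in fact, because all the data is defined modulo $\mathfrak{m}_L^k$ and convergence is over the whole residue disk (here $L$ unramified, $p\neq 2$), a single choice of $k$ works across the entire residue disk $u(\Omega)$, so the bound is genuinely uniform in $s\in u(\Omega)$. This gives ii).

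For part i) the subtlety is that $Z\cdot h$ varies with $h$, so one cannot simply fix a single closed subvariety $Z'$. The key observation is that the $Z\cdot h$ form an algebraic family: as $h$ ranges over $\H_1$, the subschemes $Z\cdot h\subseteq \H_1$ are the fibres of a closed subscheme $\mathcal{Z}\subseteq \H_1\times \H_1$ (cut out by the condition that the two flags be related by an $\OO_{F_1}$-linear automorphism centralising $\phi^{[L:\Q_p]}$), and by Theorem~\ref{t:LVmain} every fibre $Z\cdot h$ has strictly smaller dimension than $\H_1$. I would first reduce to a uniform statement over $h$ by a Noetherianity/constructibility argument: the function $h\mapsto \#(\Phi_1|_{\Omega_s})^{-1}((Z\cdot h)(\OO_L))$ is controlled by the Newton polygon bound of Lemma~\ref{l:newtonpoly} applied to $F_h = f_h\circ \Phi_1|_{\Omega_s}$, where $f_h$ is a regular function cutting out $Z\cdot h$ on an affine open. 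The point is to choose these $f_h$ so that $\overline{F_h}\bmod \mathfrak{m}_L^k$ takes only finitely many values (or has bounded complexity) as $h$ varies — this can be arranged by spreading out over the parameter space and using that $\mathcal{Z}$ is a single closed subscheme, so it is cut out by finitely many equations whose coefficients are regular functions on the $\H_1$-factor parametrising $h$.

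Concretely, I would run the proof of Lemma~\ref{l:abstract} with $\mathcal{M}$ enlarged to $\M_{g,[3]}\times \H_1$ (the second factor recording the parameter $h$), or equivalently work over $\M_{g,[3]}$ but with the relevant closed subvariety in $\H_1$ replaced by the family $\mathcal{Z}$: since $\dim Z\cdot h<\dim\H_1$ for every $h$ and $\Phi_1|_{\Omega_s}$ has dense image (Theorem~\ref{t:LVmain}), for each $h$ there is a $k_h$ such that $\overline{\Phi_1|_{\Omega_s}}$ avoids $\overline{Z\cdot h}\bmod \mathfrak{m}_L^{k_h}$; by constructibility and the compactness of $\H_1(\OO_L)$ (it is a residue-disk-wise finite union), a single $k$ works for all $h$. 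Then $F_h$ is a divided powers power series whose reduction $\overline{F_h}$ is nonzero mod $\mathfrak{m}_L^k$ and ranges over a finite (or boundedly complex) set, so Lemma~\ref{l:newtonpoly} gives a bound on its number of zeroes independent of $h$; combined with the argument of part ii) this also makes it independent of $s$. The main obstacle is precisely this uniformity-in-$h$ step: verifying that the defining functions $f_h$ of $Z\cdot h$ can be chosen in an algebraic family so that their reductions mod $\mathfrak{m}_L^k$ — and hence the resulting Newton polygon bounds — are uniformly controlled, rather than merely finite for each individual $h$. Once that is in hand, both parts follow by feeding the data into Lemma~\ref{l:abstract} and Lemma~\ref{l:newtonpoly}.
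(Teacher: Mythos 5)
Your proposal is correct and follows essentially the same route as the paper: part ii) is Theorem \ref{t:LVmain} (Zariski density of $\im \Phi_1|_{\Omega_s}$) fed into Lemma \ref{l:abstract} plus compactness of $u(\Omega)$, and part i) additionally uses compactness of $\H_1(\OO_L)$ together with the fact that the reduction of $Z\cdot h$ modulo $\mathfrak{m}_L^k$ depends only on the residue class of $h$. The ``uniformity-in-$h$'' step you flag as the main obstacle is resolved in the paper by exactly this observation: for $h'$ in the residue class of $h$ one has $\overline{Z\cdot h'}=\overline{Z\cdot h}$, so one may choose defining functions $f_{h'}$ lifting a single fixed reduction $\bar f$, making the resulting divided powers power series $\overline{F}$ independent of $h'$ and non-vanishing modulo $\mathfrak{m}_L^k$, whence Lemma \ref{l:newtonpoly} gives a common bound --- no algebraic family $\mathcal{Z}\subseteq \H_1\times\H_1$ or constructibility argument is needed.
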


\begin{proof}For the second statement, by Theorem \ref{t:LVmain}, $\im \Phi_{1}|_{\Omega_s}$ is not contained in any proper closed algebraic subspace of $\mathcal{H}_1$, so we may apply Lemma \ref{l:abstract} to obtain a uniform bound in a neighbourhood of $s$. Since $u(\Omega)$ is compact, this is sufficient to obtain a global uniform bound.
	
	The first statement is similar. Since $\mathcal{H}_1(\OO_L)$ and $u(\Omega)$ are compact, it suffices to find open neighbourhoods of $U_h$ of $h$ and $V_s$ of $s$ for which $|(\Phi_1|_{\Omega_{s'}})^{-1}(Z\cdot h')|$ is bounded as $h',s'$ vary within $ U_h, V_s$. Let $U_h$ denote the residue class of $h$ modulo $\mathfrak{m}_L^k$. Then for any $h'\in U_h$, we have that $\overline{Z\cdot h'}=\overline{Z\cdot h} $. If we now fix $\bar f\in\OO_{U_{\OO_L/\mathfrak{m}_L^r}}(U_{\OO_L/\mathfrak{m}_L^r})$, for any $h' \in U_h$, there exists a choice of lift $f_{h'}$ of $\overline{f}$ vanishing on $Z\cdot h'$ (which necessarily does not vanish on the image of $\Phi_1 |_{\Omega_s}$). Proceeding as in Lemma \ref{l:abstract}, we then obtain for $s'$ sufficiently close to $s$, a divided powers power series
		 \[ F \colon \mathfrak{m}_L\overset{\Phi_1|_{\Omega_{s'}}}{\longrightarrow} \mathcal{H}_1(\OO_L) \overset{f_{h'}}{\longrightarrow} \OO_L \]
		  with zero set containing $(\Phi_1|_{\Omega_{s'}})^{-1}(Z\cdot h')$ and $\overline {F}$ constant and non-vanishing modulo $\mathfrak{m}_L^k$. Applying Lemma \ref{l:newtonpoly}, we then obtain our desired uniform bound.\end{proof}

\begin{proof}[{Proof of Prop.\ \ref{p:reduct}}]
	In order for $\rho_x\cong \psi$, there must be a summand of $\psi$ of dimension $2d[F_1:L]$ induced from $G_{F_1}$. By the Krull--Schmidt theorem, $\psi$ has finitely many such summands $W$ up to isomorphism, if any at all. For each $W$, by Frobenius reciprocity, there are finitely many $G_{F_1}$-representations $M$ for which $\Ind^{G_L}_{G_{F_1}}M \cong W$ (bounded in terms of $[F_1:L]$). For each possible $M$, fix a single $h\in \H_1(\OO_L)$ whose corresponding filtered $F$-isocrystal is isomorphic to that of $M$, if such an $h$ exists.
	
	Applying Lemma \ref{l:uniformpreimagePhi1} {\it i)} to each choice of $h$, there are only finitely many $x\in \Omega_s$ for which $H^1_\et(\calA_{x,\bar F_1},\q_p)$ is isomorphic to $M$, bounded independently of $s,\psi$. In conclusion, we obtain a uniform bound on the number of $x \in \Omega_s$ for which $\rho_x\cong \psi$.
\end{proof}

\begin{proof}[Proof of Prop.\ \ref{p:uniformss}] Replace $F,S$ with $K$ and $S_K$ as in the pf.\ of Thm.\ \ref{t:main}. Let $q\ge 3$ and $v\notin S_K\cup \{w \mid 3q(q-1)\}$ for which $K_v$ satisfies the Lawrence--Venkatesh size condition with respect to the Kodaira--Parshin family of parameter $q$. For any $x \in \mathcal{X}(\OO_{K,S_K})$, the fibre $\pi^{-1}(x)_K= \coprod_{i=1}^k (y_i)_K = \coprod_{i=1}^k \Spec K_i$ decomposes as a union of field extensions unramified outside of $S_K$. As such, there are only finitely many possibilities for the fields $K_i$ which appear. For each of these, by Faltings--Serre, there are only finitely many possibilities for semisimple Galois representations appearing as the corresponding summands of $\rho_x$. 
	
	Fix a residue disk $\Omega \subset \mathcal{X}(\OO_{K_v})$. The period domain for $\rho_{x}|_{G_{K_{v}}}$ splits as a product indexed by pairs $(i,w)$, where $w$ is a place of $K_i$ above $v$. By exactly the same reasoning as in the proof of Prop.\ \ref{p:reduct}, for any $s \in u(\Omega)$, there can only be finitely many such $x\in \Omega_s$ for which there is a choice of $y_i$ and place $w$ dividing $v$ such that $[K_{i,w}:K_v]\ge 8$ with the property that summand of $\rho_x|_{G_{K_v}}$ corresponding to $y_i$ is isomorphic to the restriction of one of the finitely many semisimple Galois representations described above, and this bound is uniform in $s\in u(\Omega)$.
	
	So assume otherwise, i.e.\
	\begin{itemize}
		\item[(*)] Within $\rho_x \cong \bigoplus_i \rho_{y_i}$, for every $i$ such that there is a place $w$ of $K_{i,w}$ dividing $v$ with $[K_{i,w}:K_v]\ge 8$, the representation $\rho_{y_i}$ is not semisimple.
	\end{itemize} To deal with such cases, Lawrence--Venkatesh introduce subspaces $(\H_{i,w})_{K_v}^\textrm{bad}\subseteq (\H_{i,w})_{K_v}$ whose $K_v$-points parameterise maximal isotropic ${K_{i,w}}$-subspaces $F_h\subset H^1_{\dR}(\calA_{y_{0,i}}/{K_{i,w}})$ for which there exists a Frobenius stable $K_{i,w}$-subspace $W$ satisfying
\[ \dim_{K_{i,w}} (F_h\cap W) \ge \frac{1}{2}\dim_{K_{i,w}} (W).  \]

Lawrence--Venkatesh prove the following facts about them:
	\begin{proposition}[{\cite[Lem.\ 6.3]{LV}}]\label{p:badsubspaces}
		The $(\H_{i,w})_{K_v}^\textrm{bad}\subseteq (\H_{i,w})_{K_v}$ are closed algebraic subspaces. If $[K_{i,w}:K_v]\ge 8$, then the containment is strict.
	\end{proposition}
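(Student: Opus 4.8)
The plan is to follow the argument of Lawrence--Venkatesh \cite[Lem.\ 6.3]{LV}, adapted to the integral/scheme-theoretic setup. First I would establish that $(\H_{i,w})^{\mathrm{bad}}_{K_v}$ is closed. Fix the decomposition $H^1_\dR(\calA_{y_{0,i}}/K_{i,w})$ into generalised eigenspaces for the (now linear) power of Frobenius $\phi^{[L:\q_p]}$; a Frobenius-stable $K_{i,w}$-subspace $W$ is then a sum of subspaces of these eigenspaces, so the set of relevant $W$ (of each fixed dimension) is parametrised by a product of Grassmannians, which is proper. The condition $\dim(F_h\cap W)\ge \tfrac12\dim W$ is a closed condition on the pair $(F_h,W)$: it is cut out by the vanishing of appropriate minors of the map $F_h\oplus W\to H^1_\dR$. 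Hence the locus of pairs $(h,W)$ satisfying the inequality is a closed subscheme of $\H_{i,w}\times(\text{Grassmannian of Frobenius-stable }W)$, and $(\H_{i,w})^{\mathrm{bad}}_{K_v}$ is its image under the projection to $\H_{i,w}$, which is closed by properness of the Grassmannian factor. One only needs finitely many dimensions of $W$ to consider, so this is a finite union of closed sets.

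For the strictness when $[K_{i,w}:K_v]\ge 8$, I would argue that if $(\H_{i,w})^{\mathrm{bad}}_{K_v}$ were all of $(\H_{i,w})_{K_v}$, then \emph{every} maximal isotropic flag would have large intersection with some Frobenius-stable subspace, and this would contradict the structure of the Hodge and Newton filtrations. Concretely, the Kodaira--Parshin/Prym construction produces an $F$-isocrystal whose Frobenius eigenvalues (slopes) are constrained, and a dimension count shows that a generic maximal isotropic subspace $F_h$ meets each Frobenius-stable $W$ in at most roughly $\tfrac12\dim W$ minus a positive correction once the degree $[K_{i,w}:K_v]$ is large enough; the threshold $8$ is exactly where this correction becomes available. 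This is the content cited in \cite[Pf.\ of Lem.\ 6.3]{LV} and I would simply invoke their computation, noting that $[F_i:L]=[K_{i,w}:K_v]$ in our notation and that passing from $\q_p$-vector spaces to $\OO_L$-schemes does not affect the dimension inequality since it can be checked on the generic fibre.

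The main obstacle is the strictness statement: verifying that a \emph{generic} point of $\H_{i,w}$ is not bad requires producing an explicit maximal isotropic subspace whose intersection with every Frobenius-stable $W$ is small, and this is where the arithmetic of the particular family (the structure of the Prym and the resulting constraints on Frobenius eigenvalues, together with the hypothesis $[K_{i,w}:K_v]\ge 8$) enters in an essential way. Since this is precisely what Lawrence--Venkatesh carry out, and since the present paper explicitly takes their results as input, the cleanest route is to cite \cite[Lem.\ 6.3]{LV} directly for both assertions, observing only that their proof is insensitive to the integral refinement and to the restriction-of-scalars packaging used here. The closedness half is routine once one observes the Frobenius-stable subspaces form a proper scheme; the strictness half is the genuinely arithmetic input and is deferred to \cite{LV}.
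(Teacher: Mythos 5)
Your proposal ultimately concludes that the cleanest route is to cite \cite[Lem.\ 6.3]{LV} for both assertions, and that is exactly what the paper does: it states the proposition with that citation in its header and supplies no further proof. Your supplementary sketch (closedness via properness of the locus of Frobenius-stable subspaces, strictness deferred to the Lawrence--Venkatesh dimension count) is a reasonable account of what that cited lemma actually proves, though describing the Frobenius-stable $W$ as parametrised by ``a product of Grassmannians'' is slightly loose when eigenvalues repeat or Jordan blocks appear; since you defer to \cite{LV} anyway, this does not affect anything and your proposal matches the paper's treatment.
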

	\begin{proposition}[{\cite[Sublem.\ of Lem.\ 6.1]{LV}}]\label{p:badliesinbad}
	If $x$ satisfies (*) and $v$ is friendly, then there exists some choice of $(i,w)$ with $[K_{i,w}:K_v]\ge 8$ for which $\Phi_{i,w}(x)\in (\H_{i,w})_{K_v}^\textrm{bad}({K_v})$.
	\end{proposition}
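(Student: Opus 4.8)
This is \cite[Sublem.\ of Lem.\ 6.1]{LV}; we sketch how one proves it. The plan is to turn the global failure of semisimplicity recorded in (*) into a Frobenius-stable subspace of de Rham cohomology that witnesses membership in $(\H_{i,w})_{K_v}^{\textrm{bad}}$, by transporting a nilpotent endomorphism through $p$-adic Hodge theory and exploiting the polarisation.

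First I would fix an index $i$ for which some place $w\mid v$ of $K_i$ satisfies $[K_{i,w}:K_v]\ge 8$; the size condition ensures such a pair $(i,w)$ exists. By (*) the representation $\rho_{y_i}$ is not semisimple, and hence, by the behaviour of induction through the finite-index subgroup $G_{K_i}\le G_K$, the $G_{K_i}$-representation $V:=H^1_\et(\calA_{x,\bar K_i},\q_p)$ is not semisimple either. Thus $\End_{G_{K_i}}(V)$ has nonzero Jacobson radical, and that radical is stable under the adjoint anti-involution $\dagger$ attached to the (prime-to-$p$) polarisation of $\calA_{y_i}$, since that polarisation makes $V$ symplectically self-dual up to a Tate twist. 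From this one extracts a nonzero nilpotent $N\in\End_{G_{K_i}}(V)$ which is skew-adjoint for the symplectic form and has $N^{2}=0$; then $W:=\im N$ is a $G_{K_i}$-subrepresentation contained in the $G_{K_i}$-subrepresentation $\ker N$, and $W$ is isotropic.

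Next I would pass to the local picture at $w$. Restricted to $G_{K_{i,w}}$ the representation $\rho_{y_i}$ is crystalline, and so are its subquotients, so applying $D_{\crys}$ carries $N$ to an endomorphism $N_D$ of the filtered $F$-isocrystal $D:=H^1_{\dR}(\calA_{y_{0,i}}/K_{i,w})$ which commutes with $\phi$, preserves the Hodge filtration, is skew-adjoint for the induced pairing, and satisfies $N_D^{2}=0$; moreover $\im N_D$ and $\ker N_D$ underlie the weakly admissible sub-isocrystals attached to the crystalline subrepresentations $\im N$ and $\ker N$. Writing $F_h:=\Phi_{i,w}(x)\subseteq D$ for the Hodge filtration (a maximal isotropic subspace) and $n:=\dim_{K_{i,w}}D$, compatibility of $N_D$ with $F_h$ gives $N_D(F_h)\subseteq F_h\cap\im N_D$, hence $\dim(F_h\cap\im N_D)\ge \dim F_h-\dim(F_h\cap\ker N_D)$. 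Weak admissibility rewrites $\dim(F_h\cap\im N_D)$ and $\dim(F_h\cap\ker N_D)$ as the sums of the Frobenius slopes of $\im N_D$ and of $\ker N_D$ respectively (the Hodge--Tate weights being $0$ and $1$), and since $D/\ker N_D\cong\im N_D$ as $\phi$-modules and $D$ has slope sum $n/2$, the displayed inequality is forced to be an equality. Finally, skew-adjointness of $N_D$ gives $(\ker N_D)^{\perp}=\im N_D$, so $\ker N_D/\im N_D$ carries a perfect alternating pairing and therefore has Frobenius slopes symmetric about $1/2$; feeding this through additivity of slopes along $0\to\im N_D\to\ker N_D\to\ker N_D/\im N_D\to 0$ and $0\to\ker N_D\to D\to\im N_D\to 0$ pins the slope sum of $\im N_D$ down to exactly $\tfrac12\dim\im N_D$. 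Hence $\dim(F_h\cap\im N_D)=\tfrac12\dim\im N_D$, which is precisely the inequality defining $(\H_{i,w})_{K_v}^{\textrm{bad}}$ with witnessing subspace $\im N_D$, so $\Phi_{i,w}(x)\in(\H_{i,w})_{K_v}^{\textrm{bad}}(K_v)$.

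The step I expect to be the main obstacle is the extraction of a skew-adjoint, square-zero nilpotent from the radical: a priori the $\dagger$-stable radical of $\End_{G_{K_i}}(V)$ need contain no such element, and the degenerate situations that remain — most notably those coming from complex multiplication, where $\Frob_v$ acts through a small torus and a genuine subrepresentation can acquire a sub-isocrystal of average slope $<1/2$ — have to be dealt with directly. It is exactly to rule these out that $v$ is chosen friendly, and the remaining analysis of Frobenius eigenvalue valuations at such $v$ is the substance of \cite[Lem.\ 6.1]{LV}.
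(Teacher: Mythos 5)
The paper does not prove this proposition---it is quoted verbatim from Lawrence--Venkatesh---so the relevant comparison is with their Sublemma of Lemma~6.1. Your route through a nilpotent equivariant endomorphism is not theirs, and it has a genuine gap at its very first step: non-semisimplicity of $V$ does \emph{not} imply that $\End_{G_{K_i}}(V)$ has nonzero Jacobson radical, let alone a nonzero nilpotent. The typical way semisimplicity fails is a non-split extension $0\to A\to V\to B\to 0$ with $A\not\cong B$ irreducible, and there $\End_{G_{K_i}}(V)=\q_p$ consists of scalars only. So the object $N$ on which your entire argument rests need not exist, and this is the main case, not a degenerate one. Your closing paragraph flags the extraction of a skew-adjoint square-zero $N$ as the obstacle but then misattributes its resolution to friendliness; in \cite{LV} friendliness plays no role in producing any endomorphism.

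The actual argument works directly with a proper nonzero $G_{K_i}$-subrepresentation $W\subseteq V$, which non-semisimplicity does supply. Since $V$ is crystalline at $w$, so is $W$, hence $D:=D_{\cris}(W|_{G_{K_{i,w}}})$ is a weakly admissible (not merely sub-) object, giving the \emph{equality} $\dim(F_h\cap D)=t_H(D)=t_N(D)$ with Hodge--Tate weights in $\{0,1\}$. One then computes $t_N(D)$ globally: $\det W$ is an algebraic Hecke character of $K_i$ of weight $\dim W$ (by purity of weight one), and the friendly hypothesis on $v$ is exactly what forces such a character to have balanced valuation at $w$, i.e.\ $t_N(D)=\tfrac12\dim W$. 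That yields $\dim(F_h\cap D)\ge\tfrac12\dim D$ with witness $D$, with no polarisation or self-duality of $W$ needed. Your slope-symmetry computation for $\im N_D$ via $(\ker N_D)^\perp=\im N_D$ is internally consistent but is doing work that the determinant-character argument renders unnecessary, and it cannot substitute for it because it never applies in the scalar-endomorphism case above. If you want to salvage your write-up, replace the nilpotent by an arbitrary proper nonzero subrepresentation and insert the Hecke-character slope computation at a friendly place; everything downstream of that is then essentially \cite[Lem.\ 6.1]{LV}.
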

Now set $\H_{i,w}^\textrm{bad}$ to be the closure of $(\H_{i,w})_{K_v}^\textrm{bad}$ within $\H_{i,w}$.
By Proposition \ref{p:badsubspaces}, we may apply Lemma \ref{l:uniformpreimagePhi1} {\it ii)} to obtain a uniform bound on the elements of $\Omega_s\cap \X_s(\OO_{K,S_K})$ whose period image lies in any $\H_{i,w}^\textrm{bad}$ with $[K_{i,w}:K_v]\ge 8$. By Lemma \ref{l:size}, we may additionally assume that $v$ was chosen to be friendly, so by Proposition \ref{p:badliesinbad}, we have uniformly bounded the $x$ with non-semisimple $\rho_x$.
\end{proof}

\begin{remark}\label{r:CHM}We conclude with a brief sketch of an alternative proof of Theorem \ref{t:main} using the stronger input of non-Zariski density of rational points on any $\calX \ti_{\M_{g,[3]}}Z$ for an arbitrary subvariety $Z\subseteq \M_{g,[3]}$. This approach is in the form of Caporaso--Harris--Mazur \cite{CHM}. Both methods rely on the existence of a relative Kodaira--Parshin family and Lawrence--Venkatesh's study of its fibres.
	
	By the same reduction steps as above, it suffices to prove uniform boundedness of points whose Galois representation associated via the Kodaira--Parshin family lies in a fixed isomorphism class. Under the same hypotheses and notation as used previously, for any $u \in \MM_{g,[3]}(\OO_L)$, the Kodaira--Parshin family over $\X_u$ in fact satisfies the inequality \cite[Proof of Lemma 6.2]{LV}:
	\begin{equation} \dim\left( \overline{\im \Phi_p|_{\mathcal{X}_u}}^\Zar\right) > (3g-3+1) +\dim Z(\phi^{[L:\q_p]})
	\label{eq:LVCHM} \end{equation}
	($3g-3+1$ being the dimension of $\mathcal{X}$). The inequality certainly then holds with $\dim (\overline{\im \Phi_p|_{\mathcal{X}_u}}^\Zar)$ replaced by $\dim( \overline{\im \Phi_p}^\Zar)$. So the preimage within $\calX(\OO_L)$ of any $Z(\phi^{[L:\q_p]})$-orbit has ``expected'' dimension zero. By p-adic Bakker--Tsimerman \cite[Sec.\ 9.2]{LV}, if the preimage has exceptional dimension, it is contained within a proper closed subvariety $Z_1$. Let $\tilde Z_1$ denote the set of points $u$ of $\MM_{g,[3]}$ for which $\X_u\subseteq Z_1$. Any $\mathcal{X}_u$ for $u$ lying outside of $\tilde Z_1$ necessarily has finitely many $\OO_L$-points with a fixed Galois representation since $\X_u \cap Z_1$ is finite. Moreover, this bound is uniform for $u\notin \tilde Z_1$ as the map $Z_1 \cap \pi^{-1}(U_1)\to U_1$ is a finite map.
	
	It remains to consider $\X_u$ for $u \in \tilde Z_1$. Since $\tilde Z_1$ has strictly smaller dimension than $\MM_{g,[3]}$ and the inequality \eqref{eq:LVCHM} still holds for $\X_{\tilde Z_1} \to \tilde Z_1$ as it holds fibrewise, we may iterate to obtain a global uniform bound. 
	
	Geometrically, this control on the fibrewise monodromy has shown that the preimage of $Z(\phi^{[L:\q_p]})$ is contained in a closed subvariety $Z\subsetneq \XX$ which necessarily does not contain any fibre of $\XX\overset{u}{\to} \MM_{g,[3]}$. If one was able to show that \eqref{eq:LVCHM} in fact holds with $\dim (\overline{\im \Phi_p|_{\mathcal{X}_u}}^\Zar)$ replaced by $\dim( \overline{\im \Phi_p|_{Z_1}}^\Zar)$, then one would be able to apply $p$-adic Bakker--Tsimerman directly to $Z_1$ to deduce that $\Phi_p^{-1} (Z(\phi^{[L:\q_p]}))$ has finitely many points lying outside of a proper closed algebraic subspace $Z_2\subsetneq Z_1$. If one could show this inequality holds for any $Z_i$ that may arise, then one would be able to deduce finiteness of $\XX(\OO_{F,S})$ itself. This requires existence of sufficient ``horizontal'' monodromy, which is a much deeper result than the fibrewise direction used above.
\end{remark}

\bibliographystyle{amsalphainitials}

\bibliography{MyCollection}

\vspace{0.7cm}

\noindent \emph{E-mail address:}	\texttt{lemos.pj@gmail.com}

\noindent\textsc{\noindent Department of Mathematics, King's College London, Strand, London,\\ WC2R 2LS, UK and Heilbronn Insitute for Mathematical Research, Bristol, UK}\\
\noindent \emph{E-mail address:}	\texttt{alex.torzewski@gmail.com}

\newpage

\end{document}